\documentclass[11pt]{amsart}
\usepackage{dsfont}
\usepackage{amssymb, amsmath, amsthm}
\pagestyle{plain}
\usepackage{mathrsfs}
\newcommand{\no}[1]{#1}
\renewcommand{\no}[1]{}
\no{\usepackage{times}\usepackage[subscriptcorrection, slantedGreek, nofontinfo]{mtpro}
\renewcommand{\Delta}{\upDelta}}
\usepackage{color}
\usepackage{graphicx}
\usepackage{enumerate,comment}

\usepackage{hyperref}
\usepackage{xcolor}
\hypersetup{
  colorlinks,
  linkcolor={blue!80!black},
  urlcolor={blue!80!black},
  citecolor={blue!80!black}
}

 \setlength{\marginparwidth}{0.6in}
\setlength{\oddsidemargin}{0.0in}
\setlength{\evensidemargin}{0.0in}
\setlength{\textwidth}{6.5in}
\setlength{\topmargin}{0.0in}
\setlength{\textheight}{8.5in}

\newtheorem{theorem}{Theorem}[section]

\newtheorem{lem}{Lemma}[section]

\newtheorem{cor}{Corollary}[section]

\theoremstyle{remark}

%
\newcommand{\bel}{\begin{equation} \label}
\newcommand{\ee}{\end{equation}}

\newcommand{\R}{{\mathbb R}}

\def\phi {\varphi}

\renewcommand{\leq}{\leqslant}
\renewcommand{\geq}{\geqslant}

\def\beq{\begin{equation}}
\def\eeq{\end{equation}}
\newcommand{\bea}{\begin{eqnarray}}
\newcommand{\eea}{\end{eqnarray}}
\newcommand{\beas}{\begin{eqnarray*}}
\newcommand{\eeas}{\end{eqnarray*}}

\usepackage{multirow}
\usepackage{booktabs}
\newtheorem{example}{\bf{Example}}
\usepackage{subfigure}
\usepackage{epstopdf}
\usepackage{diagbox}
\providecommand{\abs}[1]{\left\lvert#1\right\rvert}
\providecommand{\norm}[1]{\left\lVert#1\right\rVert}

%

\numberwithin{equation}{section}


\title[]{Determination and reconstruction of a semilinear term from point measurements}

\author[Y. Kian]{Yavar Kian}
\address{Univ Rouen Normandie, CNRS, Normandie Univ, LMRS UMR 6085, F-76000 Rouen, France}
\email{yavar.kian@univ-rouen.fr}

\author[H. Liu]{Hongyu Liu}
\address{Department of Mathematics, City University of Hong Kong, Hong Kong SAR, China}
\email{hongyliu@cityu.edu.hk}

\author[L. Wang]{Li-Li Wang}
\address{School of Mathematics, Hunan University, Changsha 410082, China.}
\email{lilywang@hnu.edu.cn}

\author[G. Zheng]{Guang-Hui Zheng}
\address{School of Mathematics, Hunan University, Changsha 410082, China.}
\email{zhenggh2012@hnu.edu.cn}

\begin{document}
\begin{abstract}
In this article we study the inverse problem of determining a semilinear term appearing in an elliptic equation from boundary measurements. Our main objective is to develop flexible and general theoretical results that can be used for developing numerical reconstruction  algorithm for this inverse problem. For this purpose, we develop a new method, based on different properties of solutions of elliptic equations, for treating the determination of the semilinear term as a source term from a point measurement  of the solutions. This approach not only allows us to make important relaxations on the data used so far for solving this class of inverse problems, including general Dirichlet excitation lying in a space of dimension one and measurements located at one point on the boundary of the domain, but it also allows us to derive a novel algorithm for the reconstruction of the semilinear term. The effectiveness of our algorithm is corroborated by extensive numerical experiments. Notably, as demonstrated by the theoretical analysis, we are able to effectively reconstruct the unknown nonlinear source term by utilizing solely the information provided by the measurement data at a single point.

\medskip
\noindent
{\bf  Keywords:} Inverse Problem, semilinear elliptic equation, uniqueness, stability, numerical analysis \\

\medskip
\noindent
{\bf Mathematics subject classification 2020 :} 35R30, 35J61, 65M32
\end{abstract}


\maketitle


\section{Introduction}

Let $\Omega$ be a bounded and connected domain of $\mathbb{R}^n$, $n\geq 2$, with $C^{2+\alpha}$ boundary, $\alpha\in(0,1)$, boundary.  Let   $F\in C^1(\R)$ and let $a:=(a_{i,j})_{1 \leq i,j \leq n} \in C^2(\overline{\Omega};\R^{n^2})$
be symmetric, that is
$$
a_{i,j}(x)=a_{j,i}(x),\ x \in \Omega,\ i,j = 1,\ldots,n,
$$
and let $a$ fulfill the ellipticity condition: there exists a constant
$c>0$ such that
\bel{ell}
\sum_{i,j=1}^n a_{i,j}(x) \xi_i \xi_j \geq c |\xi|^2, \quad
\mbox{for each $x \in \overline{\Omega},\ \xi=(\xi_1,\ldots,\xi_n) \in \R^n$}.
\ee

We consider the following boundary value problem
\bel{eq1}
\left\{
\begin{array}{ll}
-|a|^{\frac{1}{2}}\sum_{i,j=1}^n \partial_{x_i}
\left( |a|^{-\frac{1}{2}}a_{i,j}(x) \partial_{x_j} u_\delta(x) \right)=F(u_\delta(x)) & \mbox{in}\ \Omega ,
\\
u_\delta=\delta g &\mbox{on}\ \partial\Omega,
\end{array}
\right.
\ee
with $|a|$  the absolute of value of the determinant of the matrix $a$ and  $g\in C^{2+\alpha}(\partial\Omega)$. In this article, we assume that the data $g$ is chosen in such way that $g(\partial\Omega)=[0,R]$, with $R>0$, and $\delta\in[0,1]$. We assume here  that $F$ is a non-increasing function and $F(0)=0$.

It is well known that \eqref{eq1}  admits a unique  solution $u_\delta\in C^{2+\alpha}(\overline{\Omega})$ (see \cite[Theorem 8.3, pp. 301]{LU} for the existence and \cite[Theorem 10.7]{GT} for the uniqueness), $\delta\in[0,1]$. Fixing $x_0\in\partial\Omega$, we consider in this article the determination of $F$ on $[0,R]$ from the knowledge of $\partial_{\nu_a} u_\delta(x_0)$, $\delta\in[0,1]$, with  $\nu(x)=(\nu_1(x),\ldots,\nu_n(x))$ the outward unit normal to $\partial\Omega$ computed at $x \in \partial\Omega$ and
$$\partial_{\nu_a} v(x)=\sum_{i,j=1}^na_{i,j}(x)\partial_{x_j}v(x)\nu_i(x),\quad x\in\partial\Omega.$$
More precisely, we study the unique and stable theoretical determination of such class of semilinear terms from this class of data as well as the corresponding numerical reconstruction.

Let us recall that semilinear elliptic equations of the form \eqref{eq1} can model different physical phenomenon. This includes problems of  spreading of biological populations or problems appearing in combustion theory associated with stationary solutions of reaction diffusion equations \cite{Z}. This class of equations appear also naturally in many models in Plasma Physics. This includes magnetohydrodynamic equilibrium in a toroidal device (Tokamak) modelled by the so-called Grad-Shafranov equation which can be formulated in terms of a semilinear elliptic equation of the form \eqref{eq1} (see e.g. \cite{Bi,MT,P}). In all these problems the nonlinear term $F$ plays a fundamental role in the corresponding physical law which explain the necessity of determining such expression. In this article we study the determination of such nonlinear terms from  measurements  given by measurement at one point at the boundary of the domain and general Dirichlet excitation lying in a space of dimension one.

The determination of nonlinear terms appearing in elliptic equations  has received a lot of attention this last decades. Most of the results in that direction are uniqueness results based on the linearization  method developed by \cite{Is1} and generalized by \cite{KLU}. In that direction we can mention the work of \cite{IN,IS} that considered the first results of determination of general semilinear terms appearing in an elliptic equation by applying the first order linearization. This approach was then extended by \cite{Is2,Is3} for the determination of semilinear terms depending only on the solution from partial data. More recently, many authors studied the determination of nonlinear terms from partial data or on a manifold by applying the higher order linearization technique and without being exhaustive we can mention the works of \cite{FO20,KKU,KU0,KU,LLLS,N,ST}. The linearization method has also been used for deriving stable determination of nonlinear terms depending only on the solution by several authors and one can refer for instance to the works \cite{CHY,K1,K2}.

Most of the above mentioned results require an important amount of data that are difficult to compute numerically. In addition, the linearization method used in these theoretical results is not yet well understood in the context of numerical reconstruction and, as far as we know, there has been no numerical reconstruction method for these theoretical  results. Some alternative approach  to the linearization methods have also been developed for proving the determination of a nonlinear term appearing in an elliptic equation. This includes the work of \cite{BV}, that strongly relies on the singularities of the domain $\Omega$ and does not work with smooth domains, and the approach of \cite{Ca,PR} solving this inverse problem with overspecified data. To the best of our knowledge, only the approach of \cite{Ca,PR} has been exploited for the derivation of a reconstruction algorithm  of a quasilinear term in \cite{EPS1,Kug}. We are not aware of any article in the mathematical literature studying the numerical reconstruction of a semilinear term appearing in an elliptic equation in accordance with the available theoretical results. The main goal of the present article is to prove theoretical results involving  data that can be exploited for numerical reconstruction and to use such data for the derivation of  a reconstruction algorithm based on Tikhonov regularization method. Specifically, an iterative thresholding algorithm has been developed and employed to address the nonlinear source inversion problem mentioned above. In recent years, this type of iterative approach has gained considerable attention in solving various inverse problems, including image processing \cite{LJY}, inverse source problems \cite{JFZ2015}, and coefficient identification problems for PDEs \cite{YLY}. For the convergence analysis of the iterative thresholding algorithm within a general framework, we can refer to \cite{DDD2004,DTV2007}. Our analysis is adapted to the determination of semilinear terms depending on the solutions. By employing the iterative thresholding algorithm, extensive numerical examples corroborate our theoretical analysis on the uniqueness and stability results. In particular, it has been demonstrated that a single point of measurement data is sufficient to effectively reconstruct the nonlinear source term.

\section{Main results}

In this section we state our main results which includes uniqueness and stability results. We start with a uniqueness results stated for a general class of semilinear terms.

\begin{theorem}\label{t1}   For $j=1,2$, let $F_j\in C^2(\R)$ with $F_j$ non-increasing and $F_j(0)=0$. Consider $u_{j,\delta}\in C^{2+\alpha}(\overline{\Omega})$ the solution of \eqref{eq1} for $F=F_j$, $\delta\in[0,1]$.
Then, for any arbitrary chosen $x_0\in\partial\Omega$ and $\delta_0\in(0,1]$, the condition
\bel{t1a}\partial_{\nu_a}u_{1,\delta}(x_0)=\partial_{\nu_a}u_{2,\delta}(x_0),\quad \delta\in[0,\delta_0]\ee
implies that one of the following conditions holds true:\\
(i) The map $F=F_1-F_2$ changes sign an infinite number of time on $[0,\delta_0 R]$.\\
(ii) $F_1=F_2$ on $[0,\delta_0 R]$.\end{theorem}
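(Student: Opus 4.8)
The plan is to reformulate the problem so that the difference $F = F_1 - F_2$ acts as a \emph{source term} in a linear elliptic equation, and then to exploit the point measurement $\partial_{\nu_a}u_{1,\delta}(x_0) = \partial_{\nu_a}u_{2,\delta}(x_0)$ together with the sign structure of $F$. First I would fix $\delta \in [0,\delta_0]$ and set $w_\delta = u_{1,\delta} - u_{2,\delta}$. Writing $F_1(u_{1,\delta}) - F_2(u_{2,\delta}) = \big(F_1(u_{1,\delta}) - F_1(u_{2,\delta})\big) + \big(F_1 - F_2\big)(u_{2,\delta})$ and using the mean value theorem on the first bracket, the difference $w_\delta$ solves a linear equation of the form
\bel{eqdiff}
-|a|^{\frac{1}{2}}\sum_{i,j=1}^n \partial_{x_i}\!\left(|a|^{-\frac{1}{2}}a_{i,j}\,\partial_{x_j} w_\delta\right) - c_\delta(x)\, w_\delta = F(u_{2,\delta}(x)) \quad \text{in } \Omega,
\ee
with $w_\delta = 0$ on $\partial\Omega$, where $c_\delta(x) = \int_0^1 F_1'\big(t u_{1,\delta} + (1-t)u_{2,\delta}\big)\,dt \leq 0$ by the monotonicity assumption $F_j$ non-increasing. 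The key structural feature is that $c_\delta \leq 0$, so the operator on the left is a genuine positive (coercive) elliptic operator, which will guarantee a maximum principle and a well-defined sign for the fundamental solution / Green's function.

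**The measurement condition.**

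Since $w_\delta$ vanishes on $\partial\Omega$, the hypothesis \eqref{t1a} states precisely that $\partial_{\nu_a}w_\delta(x_0) = 0$ for all $\delta \in [0,\delta_0]$. I would represent $w_\delta$ via the Green's function $G_\delta(x,y)$ of the operator in \eqref{eqdiff} with Dirichlet boundary condition, so that $w_\delta(x) = \int_\Omega G_\delta(x,y) F(u_{2,\delta}(y))\,dy$, and then differentiate in the conormal direction at $x_0$:
\bel{measint}
\partial_{\nu_a}w_\delta(x_0) = \int_\Omega \partial_{\nu_a(x_0)} G_\delta(x_0,y)\, F(u_{2,\delta}(y))\, dy = 0.
\ee
The crucial point is the sign of the Poisson-type kernel $P_\delta(y) := \partial_{\nu_a(x_0)}G_\delta(x_0,y)$: by Hopf's lemma applied to the Green's function (which is positive inside $\Omega$ and zero on the boundary), the conormal derivative at the boundary point $x_0$ has a strict sign — it is strictly negative (or strictly positive, depending on orientation) for every interior $y$. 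Thus $P_\delta$ does not vanish on $\Omega$ and has a fixed sign.

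**Extracting the sign contradiction.**

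Now suppose toward the dichotomy that $F$ does \emph{not} change sign infinitely often on $[0,\delta_0 R]$, so $F$ has only finitely many sign changes and in particular there is a subinterval on which $F$ has a constant, say nonzero, sign — or $F \equiv 0$. The strategy is to choose $\delta$ so that the range of $u_{2,\delta}$, which by the maximum principle and the boundary condition $u_{2,\delta}|_{\partial\Omega} = \delta g$ with $g(\partial\Omega)=[0,R]$ is essentially $[0,\delta R]$, probes exactly the region where $F$ has a fixed sign. On such a range, $F(u_{2,\delta}(y))$ has a fixed sign throughout $\Omega$ (or is identically zero), while $P_\delta(y)$ has a fixed sign by Hopf. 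If $F$ is not identically zero on that subinterval, the integrand in \eqref{measint} is of one sign and not identically zero, forcing the integral to be strictly nonzero — contradicting \eqref{measint}. Running this argument across the finitely many monotonicity/sign intervals, and letting $\delta$ sweep up to $\delta_0$, forces $F \equiv 0$ on $[0,\delta_0 R]$, which is alternative (ii).

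**Main obstacle.**

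I expect the principal difficulty to be making the sweeping argument precise: for a fixed $\delta$ the function $u_{2,\delta}$ takes a whole range of values in $\Omega$, so $F(u_{2,\delta}(\cdot))$ need \emph{not} have a single sign even when $F$ has finitely many sign changes, since $u_{2,\delta}$ may cross several of the sign-change points of $F$. The delicate part is therefore to use the one-parameter family in $\delta$ — differentiating \eqref{measint} in $\delta$, or examining the behavior as $\delta \to 0^+$ where $u_{2,\delta} \approx \delta g$ is small and the leading behavior of $F$ near $0$ is exposed — to peel off the sign information near the origin first and then propagate it outward by a continuation/bootstrap in $\delta$. Controlling the positivity of the kernel $P_\delta$ uniformly in $\delta$, and ensuring the interior range of $u_{2,\delta}$ is genuinely $(0,\delta R)$ with the right monotone dependence on $\delta$, are the technical linchpins of the argument.
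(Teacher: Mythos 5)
Your core mechanism is the same as the paper's: write the difference $w_\delta=u_{1,\delta}-u_{2,\delta}$ as the solution of a linear equation with non-negative potential $-c_\delta\geq 0$ (coming from the monotonicity of $F_1$ via the mean value theorem) and source $F(u_{2,\delta})$, represent the point measurement $\partial_{\nu_a}w_\delta(x_0)$ as an integral of $F(u_{2,\delta})$ against a one-signed kernel (your $\partial_{\nu_a}G_\delta(x_0,\cdot)$ is, up to weights and sign, the paper's Poisson kernel $P_\delta(\cdot,x_0)$, and your Hopf-lemma strict sign plays the role of the paper's kernel estimates combined with unique continuation), use the maximum principle to confine the range of $u_{2,\delta}$ to $[0,\delta R]$, and conclude that $F$ cannot keep a fixed sign on $[0,\delta R]$ without vanishing there. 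Up to one technical point, this part is correct: your differentiation of the Green representation under the integral sign needs justification near the singularity at $x_0$; the paper instead proves the identity rigorously (its Lemma 3.1, following Salo--Tzou) by approximating the Dirac measure at $x_0$ by smooth boundary data and passing to the limit.

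The genuine gap is the final step, which you yourself flag as the ``main obstacle'': the claim that one can ``run the argument across the finitely many sign intervals and let $\delta$ sweep up to $\delta_0$.'' Once $\delta R$ passes the first sign change of $F$, the range of $u_{2,\delta}$ straddles intervals on which $F$ has opposite signs, the integrand mixes signs, and your positivity argument no longer applies; differentiating in $\delta$ or examining $\delta\to 0^+$ does not repair this, and indeed no propagation across a genuine sign change is possible with this method --- that is precisely why alternative (i) must appear in the statement. The fix is not analytic but logical: argue by contradiction. Assume $F$ has finitely many sign changes on $[0,\delta_0R]$ \emph{and} $F\not\equiv 0$ there. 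Set $t^*=\inf\{t\in[0,\delta_0R]:\ F(t)\neq 0\}$. Then $F\equiv 0$ on $[0,t^*]$, and since the sign changes are finitely many, $F$ has constant sign on $[t^*,t^*+\epsilon]$ for some $\epsilon>0$ with $t^*+\epsilon\leq \delta_0R$; hence $F$ has constant sign and is $\not\equiv 0$ on $[0,\delta_1R]$ with $\delta_1:=(t^*+\epsilon)/R$. Your one-interval argument applied at this \emph{single} value $\delta_1$ yields $F\equiv 0$ on $[0,\delta_1R]$, a contradiction. Therefore finitely many sign changes forces $F\equiv 0$ on all of $[0,\delta_0R]$, which is alternative (ii). This is exactly how the paper concludes: only one well-chosen $\delta$ is ever needed, and the sweeping you were trying to justify is unnecessary.
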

As a direct consequence of Theorem \ref{t1} we can prove the following.

\begin{cor}\label{c1} Let the condition of Theorem \ref{t1} be fulfilled and assume that, for $j=1,2$, $F_j$ is analytic on $\R$. Then condition \eqref{t1a} implies that $F_1=F_2$.
\end{cor}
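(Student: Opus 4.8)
The plan is to derive the corollary directly from Theorem \ref{t1} by exploiting the rigidity of analytic functions to discard the oscillatory alternative (i). Write $F := F_1 - F_2$; since $F_1$ and $F_2$ are analytic on $\R$, so is $F$, and I will ultimately show $F \equiv 0$ on $\R$.

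First I would apply Theorem \ref{t1}: under hypothesis \eqref{t1a}, either (i) $F$ changes sign infinitely often on $[0,\delta_0 R]$, or (ii) $F_1 = F_2$ on $[0,\delta_0 R]$. To eliminate (i), note that each sign change produces a zero of the continuous function $F$ in the compact interval $[0,\delta_0 R]$, so (i) yields infinitely many zeros there; by Bolzano--Weierstrass they accumulate at some $x_* \in [0,\delta_0 R]$. Because $F$ is analytic on the connected set $\R$ and its zero set has an accumulation point, the identity theorem for real-analytic functions forces $F \equiv 0$ on $\R$, i.e.\ $F_1 = F_2$. Thus alternative (i) already delivers the conclusion.

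In alternative (ii) we have $F_1 = F_2$ on the nondegenerate interval $[0,\delta_0 R]$ (recall $\delta_0 \in (0,1]$ and $R > 0$). Such an interval has accumulation points, so applying the identity theorem once more to the analytic function $F$ propagates the agreement from $[0,\delta_0 R]$ to all of $\R$. In either case $F_1 = F_2$ on $\R$, which is the desired statement. I expect no genuine obstacle here, as Theorem \ref{t1} carries the analytic weight; the only step requiring care is verifying that each alternative supplies a set with an accumulation point on which $F$ vanishes, so that the identity theorem may legitimately be invoked.
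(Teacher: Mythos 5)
Your proof is correct and takes essentially the same route the paper intends: the paper states Corollary \ref{c1} as a direct consequence of Theorem \ref{t1} without writing out the argument, and your reasoning---each sign change yields a zero, infinitely many zeros in the compact interval $[0,\delta_0 R]$ accumulate, and the identity theorem for real-analytic functions then kills alternative (i) and upgrades alternative (ii) from $[0,\delta_0 R]$ to all of $\R$---is exactly the omitted argument.
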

This result can be improved in the following way.
\begin{cor}\label{c1c} Let the condition of Theorem \ref{t1} be fulfilled and assume that, for $j=1,2$, $F_j$ is analytic on $\R$. Fix $(\delta_k)_{k\in\mathbb N}$ a decreasing sequence   of $(0,1]$ that converges to zero. Then condition
\bel{c1ca}\partial_{\nu_a}u_{1,\delta_k}(x_0)=\partial_{\nu_a}u_{2,\delta_k}(x_0),\quad k\in\mathbb N\ee
 implies that $F_1=F_2$.
\end{cor}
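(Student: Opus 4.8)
The plan is to deduce Corollary~\ref{c1c} from Corollary~\ref{c1} by upgrading the discrete data \eqref{c1ca} to the continuum condition \eqref{t1a}. Writing $\mathcal{L}u:=-|a|^{\frac12}\sum_{i,j=1}^n\partial_{x_i}(|a|^{-\frac12}a_{i,j}\partial_{x_j}u)$, the key point is to show that the map $\delta\mapsto \partial_{\nu_a}u_{j,\delta}(x_0)$ extends to a real-analytic function of $\delta$ on a neighborhood of $0$. Granting this, the function $h(\delta):=\partial_{\nu_a}u_{1,\delta}(x_0)-\partial_{\nu_a}u_{2,\delta}(x_0)$ is real-analytic near $0$ and, by \eqref{c1ca}, vanishes on the sequence $(\delta_k)$, which is made of distinct points converging to $0$. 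Since a real-analytic function of one variable whose zeros accumulate at an interior point of its domain vanishes identically, we get $h\equiv0$ on some interval $[0,\delta_0]$ with $\delta_0\in(0,1]$. This is precisely condition \eqref{t1a}, so Corollary~\ref{c1} applies and yields $F_1=F_2$.

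It remains to establish the analyticity. First note that $u_{j,0}\equiv0$: the zero function solves \eqref{eq1} for $\delta=0$ because $F_j(0)=0$, and it is the unique solution. To get analytic dependence near $\delta=0$ I would use the analytic implicit function theorem applied to the map
\[
\Phi_j(\delta,u)=\big(\mathcal{L}u-F_j(u),\ u|_{\partial\Omega}-\delta g\big),\qquad \Phi_j:\R\times C^{2+\alpha}(\overline{\Omega})\to C^{\alpha}(\overline{\Omega})\times C^{2+\alpha}(\partial\Omega),
\]
which satisfies $\Phi_j(\delta,u_{j,\delta})=0$. Since $F_j$ is analytic on $\R$ and $C^{2+\alpha}(\overline{\Omega})$ is a Banach algebra, the superposition operator $u\mapsto F_j(u)$ is real-analytic, and $(\delta,u)\mapsto\delta g$ is linear, so $\Phi_j$ is real-analytic. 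The partial differential $\partial_u\Phi_j(\delta,u)$ acts by $v\mapsto(\mathcal{L}v-F_j'(u)v,\ v|_{\partial\Omega})$; because $F_j$ is non-increasing we have $-F_j'(u)\geq0$, so the Dirichlet problem $\mathcal{L}v-F_j'(u)v=f$ in $\Omega$, $v=w$ on $\partial\Omega$, is uniquely solvable by Schauder theory together with the maximum principle (see \cite{GT}). Hence $\partial_u\Phi_j(\delta,u)$ is an isomorphism, and the analytic implicit function theorem provides a real-analytic branch $\delta\mapsto u_{j,\delta}\in C^{2+\alpha}(\overline{\Omega})$ on an interval $(-\delta_*,\delta_*)$, $\delta_*>0$, containing $0$. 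Composing with the bounded linear functional $v\mapsto\partial_{\nu_a}v(x_0)$ on $C^{2+\alpha}(\overline{\Omega})$ shows that $\delta\mapsto\partial_{\nu_a}u_{j,\delta}(x_0)$ is real-analytic near $0$, as required.

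The main obstacle is the analyticity input rather than the identity-theorem argument: one must justify that the Nemytskii operator $u\mapsto F_j(u)$ is genuinely real-analytic (and not merely smooth) on the Hölder space $C^{2+\alpha}(\overline{\Omega})$, and that the linearized operator is an isomorphism with the function spaces matched correctly so that the analytic implicit function theorem is applicable. The non-increasing hypothesis $F_j'\leq0$ is exactly what removes the kernel of the linearization and makes this step uniform near $\delta=0$; without it the branch $\delta\mapsto u_{j,\delta}$ could fail to be well defined or analytic. Once analyticity is secured, the passage from the sequence $(\delta_k)$ to a full interval, and thus to the hypotheses of Corollary~\ref{c1}, is immediate.
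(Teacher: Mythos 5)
Your proposal is correct in outline, but it takes a genuinely different route from the paper. The paper never considers analyticity of the forward map $\delta\mapsto\partial_{\nu_a}u_{j,\delta}(x_0)$: instead it re-runs the proof of Theorem \ref{t1} at a single, sufficiently small member of the sequence. Concretely, since $F=F_1-F_2$ is analytic, the isolated-zeros theorem yields a $k_0$ such that $F$ has constant sign on $[0,\delta_{k_0}R]$; the integral identity of Lemma \ref{l3}, the bound $0\leq u_{2,\delta_{k_0}}\leq\delta_{k_0}R$ from the maximum principle, and the positivity plus unique continuation of the Poisson kernel then force $F\equiv 0$ on $[0,\delta_{k_0}R]$, and a second application of the isolated-zeros theorem gives $F\equiv0$. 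So the paper uses analyticity only of the \emph{unknown} $F$, and effectively needs just one datum from the sequence once $k_0$ is located. You instead use analyticity of $F_j$ to make the \emph{data map} $\delta\mapsto\partial_{\nu_a}u_{j,\delta}(x_0)$ real-analytic near $0$, upgrade \eqref{c1ca} to \eqref{t1a} by the identity theorem in the variable $\delta$, and then invoke Corollary \ref{c1}. This is viable, but it rests on heavier machinery that you rightly flag as the main obstacle: analyticity of the superposition operator $u\mapsto F_j(u)$ on $C^{2+\alpha}(\overline{\Omega})$ (obtainable from the holomorphic functional calculus in this Banach algebra, since the spectrum of $u$ there is exactly its range and a real-analytic $F_j$ extends holomorphically to a complex neighborhood of $\R$), the real-analytic implicit function theorem in Banach spaces, and --- a step you pass over silently --- the identification of the local IFT branch with the solution $u_{j,\delta}$ of \eqref{eq1}, which needs the global uniqueness for \eqref{eq1} cited in the paper (or an argument that $u_{j,\delta}\to0$ in $C^{2+\alpha}(\overline{\Omega})$ as $\delta\to0$). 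What your approach buys is a general principle: for analytic nonlinearities, discrete data in $\delta$ accumulating at $0$ always upgrades to continuum data, independently of the specific inverse-problem machinery. What the paper's approach buys is economy: it reuses the Theorem \ref{t1} argument verbatim, stays entirely finite-dimensional in its analyticity input, and avoids the Nemytskii/IFT technology altogether.
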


We can also prove a uniqueness result from a single boundary measurement for some class of polynomial nonlinear terms.
 \begin{cor}\label{c3}  Let the condition of Theorem \ref{t1} be fulfilled with $g(\partial\Omega)=[0,1]$, i.e. $R=1$.
Assume that, for $j=1,2$,
\bel{c3a} F_j(s)=b_js^{\ell_j}h(s),\quad s\in\R,\ee
with $h\in C^2(\mathbb R)$ a function with no zero  on $[0,1]$, $\ell_j\in\mathbb N$, and
\bel{c3b} \min(|b_1|,|b_2|)=m>0,\quad \max(|b_1|,|b_2|)=M.\ee
Then, for any $\delta\in(0,m/M)$, the condition
\bel{c3c}\partial_{\nu_a}u_{1,\delta}(x_0)=\partial_{\nu_a}u_{2,\delta}(x_0)\ee
implies that $F_1=F_2$.
\end{cor}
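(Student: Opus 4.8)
The plan is to bypass Theorem~\ref{t1} (which requires a whole continuum of excitations $\delta$) and to argue directly for a \emph{single} $\delta\in(0,m/M)$ by a comparison/Hopf argument. The guiding idea is that the monomial structure \eqref{c3a} together with the smallness $\delta<m/M$ forces the difference $F_1-F_2$ to keep a constant sign on $[0,\delta]$, so that the escape route (i) of Theorem~\ref{t1} (infinitely many sign changes) simply cannot occur. First I would record the a priori bounds $0\le u_{j,\delta}\le\delta$ on $\overline\Omega$: the constant $0$ is a subsolution of \eqref{eq1} (since $\mathcal{L}0=0\le F_j(0)=0$) and the constant $\delta$ a supersolution (since $\mathcal{L}\delta=0\ge F_j(\delta)$, using that $F_j$ is non-increasing with $F_j(0)=0$ and $\delta\ge 0$), and the boundary orderings $0\le \delta g\le \delta$ hold because $g(\partial\Omega)=[0,1]$; the comparison principle valid for non-increasing reaction terms then gives the claim. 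Since $\delta g$ already sweeps $[0,\delta]$ on the boundary and $\Omega$ is connected, continuity yields $u_{j,\delta}(\overline\Omega)=[0,\delta]$. It therefore suffices to prove $F_1=F_2$ on $[0,\delta]$: dividing \eqref{c3a} by $h$, which is nonvanishing on $[0,1]\supset[0,\delta]$, reduces this to the monomial identity $b_1 s^{\ell_1}=b_2 s^{\ell_2}$ on $(0,\delta]$, which forces $\ell_1=\ell_2$ and $b_1=b_2$, i.e. $F_1=F_2$ on all of $\R$.

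Next I set $w=u_{1,\delta}-u_{2,\delta}$. Writing $F_1(u_{1,\delta})-F_2(u_{2,\delta})=\bigl(F_1(u_{1,\delta})-F_1(u_{2,\delta})\bigr)+(F_1-F_2)(u_{2,\delta})$ and applying the mean value theorem to the first difference, $w$ solves a linear problem
\[
\mathcal{L}w-c(x)\,w=(F_1-F_2)(u_{2,\delta})\ \text{ in }\Omega,\qquad w=0\ \text{ on }\partial\Omega,
\]
where $\mathcal{L}$ denotes the elliptic operator of \eqref{eq1} and $c(x)\le 0$ because $F_1$ is non-increasing. Thus $\mathcal{L}-c$ has nonnegative zeroth-order coefficient $-c$, so the weak and strong maximum principles and the Hopf lemma are available for it.

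The arithmetic heart is the constant-sign statement. Assuming without loss of generality $\ell_1\le\ell_2$, I factor $F_1(s)-F_2(s)=s^{\ell_1}\bigl(b_1-b_2 s^{\ell_2-\ell_1}\bigr)h(s)$. If $\ell_1=\ell_2$ the middle factor is the nonzero constant $b_1-b_2$ (the case $b_1=b_2$ being trivial). If $\ell_1<\ell_2$, then for $s\in(0,\delta]$ one has $s^{\ell_2-\ell_1}\le s\le\delta<1$, whence $\abs{b_2 s^{\ell_2-\ell_1}}\le\abs{b_2}\,\delta< M\cdot(m/M)=m\le\abs{b_1}$, using \eqref{c3b} and $\delta<m/M$; hence $b_1-b_2 s^{\ell_2-\ell_1}$ never vanishes and keeps the sign of $b_1$ on $(0,\delta]$. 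Since $s^{\ell_1}\ge 0$ and $h$ has a fixed sign on $[0,1]$, $F_1-F_2$ does not change sign on $[0,\delta]$. This is precisely where the threshold $m/M$ is used, and it is the step I expect to be the main obstacle: the bookkeeping must be sharp enough that $\delta<m/M$ is exactly sufficient to prevent a sign change.

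Finally I combine the pieces. As $u_{2,\delta}(\overline\Omega)=[0,\delta]$, the right-hand side $\rho:=(F_1-F_2)(u_{2,\delta})$ has constant sign on $\overline\Omega$. Arguing by contradiction, suppose $F_1\not\equiv F_2$ on $[0,\delta]$; then $\rho\not\equiv 0$, say $\rho\ge 0$. The weak maximum principle gives $w\ge 0$, and the strong maximum principle for $\mathcal{L}-c$ rules out an interior zero (which would force $w\equiv 0$ and hence $\rho\equiv 0$), so $w>0$ throughout $\Omega$ while $w(x_0)=0$. Since the conormal field $\nu_a=a\nu$ points strictly outward by the ellipticity \eqref{ell} (as $\nu\cdot a\nu>0$), Hopf's lemma yields $\partial_{\nu_a}w(x_0)<0$, contradicting \eqref{c3c}. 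Therefore $\rho\equiv 0$, i.e. $F_1=F_2$ on $[0,\delta]$, and the monomial reduction of the first paragraph upgrades this to $F_1=F_2$ on $\R$.
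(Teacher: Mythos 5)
Your proof is correct, but its core engine is genuinely different from the paper's. You share the same skeleton: the a priori bounds $0\leq u_{j,\delta}\leq \delta$ from the comparison/maximum principle, the arithmetic $|b_2s^{\ell_2-\ell_1}|\leq |b_2|\delta < M(m/M)=m\leq |b_1|$ showing that $\delta<m/M$ forces $F_1-F_2$ to keep a constant sign on $[0,\delta]$, and the final identification of $F_1-F_2$ on $[0,\delta]$ through the boundary values $\delta g$ sweeping that interval. Where you diverge is in how the single measurement \eqref{c3c} kills a constant-sign source. The paper linearizes the difference $w=u_{1,\delta}-u_{2,\delta}$ exactly as you do (your $-c$ is its $q_\delta\geq 0$), but then invokes the Poisson-kernel identity of Lemma \ref{l3} to convert \eqref{c3c} into $\int_\Omega (F_1-F_2)(u_{2,\delta})\,P_\delta(\cdot,x_0)|a|^{-\frac{1}{2}}dx=0$, and concludes from nonnegativity of $P_\delta$ plus unique continuation (the kernel cannot vanish on an open set). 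You instead apply the weak/strong maximum principle and Hopf's lemma directly to $w$: a nontrivial one-signed source forces $w>0$ in $\Omega$, and $w(x_0)=0$ then gives $\partial_{\nu_a}w(x_0)<0$, contradicting \eqref{c3c}; your reduction of the conormal derivative to the normal one, via the vanishing tangential gradient on $\partial\Omega$ and the ellipticity \eqref{ell}, is the right way to close that step. Your route is more elementary and self-contained (no Poisson kernel estimates, no Lemma \ref{l3}, no unique continuation), and it also merges the paper's two-stage argument (first $\ell_1=\ell_2$ by contradiction, then $b_1=b_2$) into a single constant-sign step followed by the algebraic identity $b_1s^{\ell_1}=b_2s^{\ell_2}$ on $(0,\delta]$. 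What the paper's heavier machinery buys is uniformity: the Poisson-kernel identity is quantitative and is precisely the tool reused for the H\"older stability estimates of Theorem \ref{t3} and for the inverse-source viewpoint behind the numerical algorithm, whereas a Hopf-lemma argument yields uniqueness only. One point worth making explicit in your write-up: in the case $w\equiv 0$, you get $\rho\equiv 0$ on the open set $\Omega$, and you need continuity of $\rho$ on $\overline{\Omega}$ together with the boundary trace $u_{2,\delta}|_{\partial\Omega}=\delta g$ sweeping $[0,\delta]$ to conclude $F_1=F_2$ there; as written this is implicit, though entirely correct.
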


The uniqueness result of Theorem \ref{t1} can be extended to the following stability result.

\begin{theorem}\label{t3} Let Theorem \ref{t1} be fulfilled and assume that there exists $M>0$ such that
\bel{t3a} \norm{F_j'}_{L^\infty(0,R)}\leq M,\quad j=1,2.\ee
Assume also that the map $F_1-F_2$ changes sign $N$ times. Then, for $x_0\in\partial\Omega$, there exists $C>0$ depending on $R$, $M$, $N$, $\Omega$, $x_0$ such that,
for $g(x_0)=R$, we have
\bel{t3b}\norm{F_1-F_2}_{L^\infty(0,R)}\leq C\left(\sup_{\delta\in[0,1]}|\partial_{\nu_a}u_{1,\delta}(x_0)-\partial_{\nu_a}u_{2,\delta}(x_0)|\right)^{\frac{1}{2^N}},\ee
while, if $g(x_0)<R$, we obtain
\bel{t3c}\norm{F_1-F_2}_{L^\infty(0,R)}\leq C\left(\sup_{\delta\in[0,1]}|\partial_{\nu_a}u_{1,\delta}(x_0)-\partial_{\nu_a}u_{2,\delta}(x_0)|\right)^{\frac{1}{(n+2)^N}}.\ee\end{theorem}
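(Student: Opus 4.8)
The plan is to convert the single-point, one-parameter data into a family of weighted averages of $F:=F_1-F_2$ and then to run an induction on the number $N$ of sign changes. First I would set $w_\delta:=u_{1,\delta}-u_{2,\delta}$, which solves the homogeneous Dirichlet problem for the linearized operator $L_\delta w_\delta:=-|a|^{\frac12}\sum_{i,j}\partial_{x_i}(|a|^{-\frac12}a_{i,j}\partial_{x_j}w_\delta)-c_\delta w_\delta=F(u_{2,\delta})$, where $c_\delta(x)=\int_0^1 F_1'(u_{2,\delta}+\tau w_\delta)\,d\tau$. Because $F_1$ is non-increasing and $\norm{F_1'}_{L^\infty}\leq M$, the zeroth order coefficient satisfies $0\leq -c_\delta\leq M$, so $L_\delta$ obeys the weak and strong maximum principles with constants controlled only by the ellipticity constant, by $M$, and by the $\delta$-independent $C^2$ bounds on $u_{j,\delta}$ furnished by Schauder theory. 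As in the proof of Theorem \ref{t1}, the comparison principle gives $u_{2,\delta}(\overline\Omega)=[0,\delta R]$. Writing $\mathcal G_\delta$ for the Green function of $L_\delta$ and $P_\delta=-\partial_{\nu_a,x}\mathcal G_\delta(x_0,\cdot)>0$ for the associated Poisson kernel, the measurement becomes $m(\delta):=\partial_{\nu_a}w_\delta(x_0)=-\int_\Omega P_\delta(x_0,y)F(u_{2,\delta}(y))\,dy$, and the coarea formula recasts this as $m(\delta)=-\int_0^{\delta R}F(t)K_\delta(t)\,dt$ with a nonnegative kernel $K_\delta(t)=\int_{\{u_{2,\delta}=t\}}P_\delta(x_0,\cdot)\,\abs{\nabla u_{2,\delta}}^{-1}\,dS\geq 0$. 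Thus the data control, uniformly in $\delta$, all the weighted averages $\int_0^{\delta R}F K_\delta$ by $\eps:=\sup_{\delta\in[0,1]}\abs{m(\delta)}$.

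Next I would treat the base case $N=0$. If $F$ has constant sign on $[0,R]$, the integrand $F K_\delta$ keeps that sign, so no cancellation occurs: positivity of $K_\delta$ together with a quantitative Hopf lemma bounds $\abs{m(\delta)}$ from below by a fixed multiple of $\norm{F}_{L^\infty(0,\delta R)}$. Sweeping $\delta$ over $[0,1]$ then yields the Lipschitz estimate $\norm{F}_{L^\infty(0,R)}\leq C\eps$, which is exactly \eqref{t3b} and \eqref{t3c} with $N=0$ (exponent $1/2^0=1/(n+2)^0=1$).

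The inductive step is the heart of the argument and is where the two regimes separate. Assume the estimate for $N-1$ sign changes and let $\sigma$ be the largest sign change of $F$, so that $F$ has constant sign on $[\sigma,R]$. For $\delta\leq\sigma/R$ the range $[0,\delta R]$ lies in $[0,\sigma]$, where $F$ has only $N-1$ sign changes; the induction hypothesis controls $\norm{F}_{L^\infty(0,\sigma)}$ with exponent $1/2^{N-1}$ (respectively $1/(n+2)^{N-1}$). It remains to bound $\norm{F}_{L^\infty(\sigma,R)}$, and here the degeneracy of the detection kernel forces a loss. When $g(x_0)=R$, the point $x_0$ is a maximizer of $u_{2,\delta}$ over $\overline\Omega$, so the top values $t\to\delta R$ are read off in a boundary layer at $x_0$; writing the top contribution as an antiderivative of $F$ against a concentrated positive weight and interpolating against the second-order information $\norm{F'}_{L^\infty}\leq 2M$ (a Landau--Kolmogorov type inequality) trades one power, producing one extra square root and hence the exponent $1/2^{N}$. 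When $g(x_0)<R$, the values near $\delta R$ are attained in the interior away from $x_0$, so detecting $F$ there from a measurement at $x_0$ requires a quantitative propagation of smallness (three-ball / Carleman type) whose modulus scales with the spatial dimension; this replaces the square root by a root of order $n+2$ and gives $1/(n+2)^N$. Combining the two ranges and using that the bottom piece carries the larger exponent (hence is the smaller quantity for $\eps<1$), the binding term is the top one, which yields \eqref{t3b} and \eqref{t3c}.

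The main obstacle is precisely this inductive step. The difficulty is not mere sign cancellation: the kernel $K_\delta(t)$ genuinely degenerates for values $t$ that are not attained near $x_0$, so $F$ at such values cannot be read off linearly and must be recovered through an interpolation or propagation inequality whose modulus of continuity is strictly worse than Lipschitz. Quantifying this single-step loss sharply (a root of order $2$ when the extremal value sits at the accessible boundary point, of order $n+2$ when it must be reached through the $n$-dimensional bulk) and tracking how it compounds across the $N$ sign changes to give exactly $2^{N}$ and $(n+2)^{N}$ is the crux; a secondary technical point is to keep all constants uniform in $\delta$, which follows from uniform ellipticity and the $\delta$-independent Schauder bounds on $u_{j,\delta}$ that make the Hopf constants and the kernels $K_\delta$ uniformly nondegenerate on compact subsets.
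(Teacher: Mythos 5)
Your skeleton---linearize the difference of solutions, represent the point measurement as $\int_\Omega F(u_{2,\delta})P_\delta(x,x_0)|a|^{-\frac{1}{2}}dx$ via Lemma \ref{l3}, exploit positivity of the kernel and of $F$ on sign-definite intervals, and iterate across the sign-change intervals with a compounding root per interval---is exactly the paper's strategy (the paper iterates bottom-up from $[0,s_1]$ rather than top-down from the last sign change, but this is immaterial). However, your base case is a genuine gap. You claim that for sign-definite $F$, positivity plus a quantitative Hopf lemma yields the Lipschitz bound $\norm{F}_{L^\infty(0,R)}\leq C\eps$. This is not obtainable by these means, and it contradicts the paper's own Corollary \ref{c2}, which for constant-sign $F_1-F_2$ asserts only the exponents $\frac{1}{2}$ and $\frac{1}{n+2}$. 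The obstruction: positivity turns the measurement into control of a \emph{weighted $L^1$-norm} of $F$, and converting $L^1$ into $L^\infty$ information using only $\norm{F'}_{L^\infty}\leq 2M$ already costs a root (if $|F(t^\ast)|=\norm{F}_\infty$, then $|F|\geq \norm{F}_\infty/2$ only on an interval of length $\norm{F}_\infty/4M$). Equivalently, in the localization at the boundary point $x_1$ where $u_{2,\delta}=\delta R$: the kernel mass of $\mathcal O_\epsilon$ furnished by \eqref{poisson3} is of order $\epsilon$ when $x_0=x_1$, while the Lipschitz error of replacing $F(u_{2,\delta})$ by $F(\delta R)$ is also of order $\epsilon$, so the optimization of $\epsilon^{-1}\eps+\epsilon$ gives $\eps^{1/2}$, never $\eps$. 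Your induction lands on the stated exponents only because it is anchored on this false base case; carried out correctly, every constant-sign interval costs one root, so that already a sign-definite difference incurs $\eps^{1/2}$ (resp.\ $\eps^{1/(n+2)}$), exactly as in Corollary \ref{c2} and in the paper's proof.

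The second gap is the mechanism you propose for $g(x_0)<R$. Invoking propagation of smallness (three-ball/Carleman) is the wrong tool and would not produce the exponent $\frac{1}{n+2}$: there is nothing to propagate, since Lemma \ref{l3} already ties the measurement at $x_0$ to an integral over all of $\Omega$. What is needed is a quantitative \emph{positive lower bound} for the kernel $P_\delta(\cdot,x_0)$ near the far boundary point $x_1$ where $g=R$ (note also that the top values are attained near that boundary point, not ``in the interior''). The paper obtains this by comparing $P_\delta$ with the Poisson kernel $P_M$ of the constant-potential operator: since $0\leq q_\delta\leq M$, the maximum principle gives $P_\delta\geq P_M$, and Hopf's lemma applied to $P_M$ on $\overline{\Omega}\setminus B(x_0,t)$ yields $P_\delta(x,x_0)\geq c\,\mathrm{dist}(x,\partial\Omega)$ there. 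Hence the kernel mass of an $\epsilon$-neighborhood of $x_1$ is bounded below by $c\epsilon^{n+1}$, and optimizing $\epsilon^{-(n+1)}\eps+\epsilon$ gives precisely the $(n+2)$-root per interval. Carleman and three-ball arguments produce geometry-dependent H\"older or logarithmic moduli, with no route to the dimensional constant $n+2$; without the kernel-comparison-plus-Hopf argument, your Case 2 does not close.
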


As a  consequence of this result, we can prove the following result stated with a single boundary measurement under a monotonicity assumption.

\begin{cor}\label{c2} Let the condition of Theorem \ref{t3} be fulfilled and assume that $F_1-F_2$ is of constant sign. Then, for $x_0\in\partial\Omega$, there exists $C>0$ depending on $R$, $M$, $N$, $\Omega$, $x_0$ such that,
for $g(x_0)=R$, we have
\bel{c2a}\norm{F_1-F_2}_{L^\infty(0,R)}\leq C|\partial_{\nu_a}u_{1,1}(x_0)-\partial_{\nu_a}u_{2,1}(x_0)|^{\frac{1}{2}},\ee
while, if $g(x_0)<R$, we obtain
\bel{c2b}\norm{F_1-F_2}_{L^\infty(0,R)}\leq C|\partial_{\nu_a}u_{1,1}(x_0)-\partial_{\nu_a}u_{2,1}(x_0)|^{\frac{1}{(n+2)}}.\ee

\end{cor}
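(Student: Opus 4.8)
The plan is to treat this as the single constant--sign "piece" underlying Theorem~\ref{t3}, to show that the sign hypothesis removes the need for several excitation levels, and to read off the exponent from the Lipschitz bound \eqref{t3a}. Since $F_1-F_2$ has constant sign I would first assume, after possibly exchanging the indices, that $F:=F_1-F_2\geq0$ on $[0,R]$ (the case $F\leq0$ being identical after replacing $F$ by $-F$). For each $\delta$ I set $w_\delta:=u_{1,\delta}-u_{2,\delta}$, which vanishes on $\partial\Omega$ and solves
\[
Lw_\delta-q_\delta(x)\,w_\delta=F\bigl(u_{2,\delta}\bigr),\qquad q_\delta(x):=\int_0^1 F_1'\bigl(u_{2,\delta}+t\,w_\delta\bigr)\,dt,
\]
with $L$ the operator in \eqref{eq1}. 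Because $F_1$ is non-increasing one has $q_\delta\leq0$, while \eqref{t3a} gives $|q_\delta|\leq M$. Writing $F_j(u_{j,\delta})=F_j'(\xi)u_{j,\delta}$ and applying the maximum principle for $L-q_\delta$ yields $0\leq u_{j,\delta}\leq\delta R$; in particular $u_{2,1}$ is continuous on the connected set $\overline{\Omega}$, attains $0$ (where $g=0$) and $R$, hence is onto $[0,R]$, so that $\norm{F_1-F_2}_{L^\infty(0,R)}=\norm{F(u_{2,1})}_{L^\infty(\Omega)}$ is the quantity to be controlled.

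Next I would exploit the constant--sign hypothesis through a sign argument. Since $F(u_{2,\delta})\geq0$ and $q_\delta\leq0$, the maximum principle gives $w_\delta\geq0$ in $\Omega$, and Hopf's lemma at $x_0$ forces $\partial_{\nu_a}w_\delta(x_0)\leq0$ for every $\delta\in[0,1]$. Thus the measurement difference $\partial_{\nu_a}u_{1,\delta}(x_0)-\partial_{\nu_a}u_{2,\delta}(x_0)=\partial_{\nu_a}w_\delta(x_0)$ keeps a fixed sign: there is no cancellation between positive and negative contributions of the source. This cancellation is exactly what compels the use of several excitation levels (and the supremum over $\delta$) in Theorem~\ref{t3}; in the present single--piece situation its absence lets me work with the single level $\delta=1$, which already probes the full range $[0,R]$ because $u_{2,1}$ is onto. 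Feeding this one constant--sign piece into the stability mechanism of Theorem~\ref{t3} then produces the H\"older exponent $\tfrac12$ when $g(x_0)=R$ and $\tfrac1{n+2}$ when $g(x_0)<R$, which are precisely \eqref{c2a} and \eqref{c2b}.

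To see why the exponent is $\tfrac12$ (and to locate the hard part), I would represent the solution through the non-negative Green function $G_q$ of $L-q$ at $\delta=1$, obtaining
\[
\bigl|\partial_{\nu_a}w_1(x_0)\bigr|=\int_\Omega \bigl|P_q(x_0,y)\bigr|\,F\bigl(u_{2,1}(y)\bigr)\,|a|^{-\frac12}\,dy,\qquad P_q(x_0,\cdot):=\partial_{\nu_a,x}G_q(x,\cdot)\big|_{x=x_0}\leq0.
\]
If $H:=\norm{F}_{L^\infty(0,R)}$ is attained at $s^\ast$, the Lipschitz bound \eqref{t3a} forces $F\geq H/2$ on an $s$--interval of length at least $H/(2M)$; hence $F(u_{2,1})\geq H/2$ on the layer $\{\,|u_{2,1}-s^\ast|\leq H/(2M)\,\}$, whose thickness is bounded below by the same Lipschitz control on $u_{2,1}\in C^{2+\alpha}$. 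Substituting this into the identity above gives $\bigl|\partial_{\nu_a}w_1(x_0)\bigr|\gtrsim H^2$, that is $H\lesssim M^{1/2}\bigl|\partial_{\nu_a}w_1(x_0)\bigr|^{1/2}$, which is \eqref{c2a}.

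The hard part will be making that lower bound uniform. When $g(x_0)=R$ the point $x_0$ is a boundary maximum of $u_{2,1}$, so the layer carrying the peak reaches up to $x_0$ where $|P_q(x_0,\cdot)|$ is largest; but for a peak located at an interior level $s^\ast$ the relevant layer $\{u_{2,1}=s^\ast\}$ may approach $\partial\Omega$, where $|P_q(x_0,\cdot)|$ degenerates, so one must quantify this degeneration (via a boundary Harnack or barrier argument) to keep the constant $C$ uniform. When $g(x_0)<R$ the measurement point sits \emph{off} the maximum, and detecting an interior, $n$--dimensional layer through a non-normal boundary point costs extra volume powers: matching the layer width $\sim H/M$ to an $n$--dimensional localization replaces $H^2$ by $H^{\,n+2}$ and yields the exponent $\tfrac1{n+2}$ of \eqref{c2b}. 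This uniform lower bound, together with the localization in the case $g(x_0)<R$, is exactly the quantitative content of Theorem~\ref{t3}, on which I rely for the final constants.
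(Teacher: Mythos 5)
The first half of your argument is exactly the paper's: after normalizing $F=F_1-F_2\geq0$, the paper applies Lemma \ref{l3} as in \eqref{t3e} with $\delta=1$ and uses the constant sign to put the absolute value inside the integral,
\[
\int_\Omega\abs{F(u_{2,1}(x))}\,P_1(x,x_0)\,|a|^{-\frac{1}{2}}\,dx\;\leq\;\abs{\partial_{\nu_a}u_{1,1}(x_0)-\partial_{\nu_a}u_{2,1}(x_0)},
\]
which is precisely what your maximum-principle/Hopf observation and your Green-function identity encode. The divergence, and the genuine gap, lies in how you pass from this inequality to the sup norm of $F$.

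You localize on the spatial layer $\{\abs{u_{2,1}-s^\ast}\leq H/(2M)\}$ of the peak value $s^\ast$ and assert that the required uniform lower bound on its kernel-weighted measure ``is exactly the quantitative content of Theorem \ref{t3}''. It is not: Theorem \ref{t3} contains no estimate for interior layers. Its mechanism localizes on $\mathcal O_\epsilon$ around the \emph{fixed} boundary point $x_1$ with $g(x_1)=R$, where the comparison solutions $v_{\delta,\tau}$ and \eqref{t2aaa} force $u_{2,\delta}$ to be within $C\epsilon$ of $\delta R$, and it sweeps the value $s=\delta R$ by varying the excitation $\delta$ --- exactly the degree of freedom that the corollary forbids, since only $\delta=1$ may be used. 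What the theorem's proof does make available are the two kernel lower bounds: \eqref{poisson3} near $x_0$, and $P_M(x,x_0)\geq c\,\mathrm{dist}(x,\partial\Omega)$ (see \eqref{poisson4}) away from $x_0$. If you run your layer argument with these, a peak at an interior value $s^\ast<R$ produces a layer that meets $\partial\Omega$ only at points $y$ with $g(y)=s^\ast\neq R=g(x_0)$, hence away from $x_0$; there a ball of radius $\sim H/M$ inside the layer carries guaranteed kernel mass only of order $H^{n+1}$, and nothing in Theorem \ref{t3} lets you do better, so your scheme yields $H\cdot H^{n+1}\lesssim\abs{\partial_{\nu_a}w_1(x_0)}$, i.e.\ the exponent $\frac{1}{n+2}$ of \eqref{c2b}, not the exponent $\frac12$ of \eqref{c2a}. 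Your $H^2$ bound is valid only when the peak value is $R$ itself, whose layer does reach $x_0$. So, as written, your proposal establishes an estimate of type \eqref{c2b} in both cases and leaves \eqref{c2a} unproved; the ``boundary Harnack or barrier argument'' you allude to is never supplied, and it cannot be quoted from Theorem \ref{t3}. The paper's own proof avoids interior layers altogether: it reruns the boundary localization machinery of Theorem \ref{t3} (the comparison functions, \eqref{t2aaa}, the kernel bounds and the optimization in $\epsilon$) directly on the displayed inequality, so that values of $F$ are probed at boundary points where $g$ attains them, with the favorable bound \eqref{poisson3} reserved for the point $x_0$.
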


Let us remark that in all the above mentioned results we consider the unique and stable determination of a semilinear term depending only on the solution from important restriction of the data used so far for this class of inverse problems. Namely, while all the results, that we are aware of, consider Dirichlet excitations lying on an infinite dimensional space, in this article we consider Dirichlet excitations lying in the space spanned by a single general Dirichlet data $g$ only subjected to the condition $g(\partial\Omega)=[0,R]$. In addition, in the spirit of the work \cite{N,ST} we restrict our measurements to an arbitrary point on the boundary of the domain. Note also that in Corollary \ref{c3} we show that our uniqueness results can be stated with a single Dirichlet excitation for the determination of  semilinear terms of the form \eqref{c3a}. This last result can be stated in terms of determination of a semilinear term from single boundary measurement and as far as we know it is the first result in that direction for elliptic equations. Our uniqueness results are also extended to H\"older stability results comparable to the one of \cite{CHY} for parabolic equations, where we replace the measurements on the full boundary of \cite{CHY}  by point measurements. We mention also that our results are all stated for general class of elliptic equations with variable coefficients.

In contrast of most results devoted to the determination of nonlinear terms appearing in elliptic equations, the proof of our results are not based on the linearization method of \cite{Is1,KLU}. Instead, we develop a new  approach combining the monotonicity arguments  used for solving inverse source problems for elliptic equations and different properties of solutions of \eqref{eq1}. The link of our method with inverse source problems will be exploited in Section 6 for the numerical reconstruction of a semilinear term. The implementation of a diverse array of numerical examples has convincingly demonstrated that our algorithm is capable of effectively inverting the semilinear term using only a single point of measurement data. As far as we know, we obtain in this article the first reconstruction algorithm for the determination of a semilinear term appearing in an elliptic equation and our results are supplemented by the theoretical uniqueness and stability stated in Theorem \ref{t1},  \ref{t3} and their consequences.

This article is organized as follows. In Section 3 we recall some preliminary properties of solutions of elliptic equations, including properties of Poisson kernel, of some class of linear elliptic equations. Section 4 will be devoted to the proof of our uniqueness results while in Section 5 we show our stability result.  Finally, Section 6 will be devoted to the numerical analysis of this problem including the algorithm that we develop in Section 6.1 based on our theoretical results.
\section{Preliminary properties}

In this section we recall some preliminaries properties required for the proof of the main results of this article. Namely, fixing $q\in C^1(\overline{\Omega})$ a non-negative function and following \cite[Section 5]{MT},  we can consider $P(x,y)$, $x\in\Omega$, $y\in\partial\Omega$, the Poisson kernel of the equation
 $$-|a|^{\frac{1}{2}}\sum_{i,j=1}^n \partial_{x_i}
\left( |a|^{-\frac{1}{2}}a_{i,j}(x) \partial_{x_j} v(x) \right)+q(x)v(x)=0$$
with homogeneous Dirichlet boundary condition. We recall that the Poisson kernel $P$ is a  map defined on $\overline{\Omega}\times\partial\Omega\setminus\{(x,y)\in\overline{\Omega}\times\partial\Omega:\ x=y\}$ such that, for any $\phi\in C^{2+\alpha}(\partial\Omega)$, the solution of the boundary value problem
$$\left\{
\begin{array}{ll}
-|a|^{\frac{1}{2}}\sum_{i,j=1}^n \partial_{x_i}
\left( |a|^{-\frac{1}{2}}a_{i,j}(x) \partial_{x_j} v \right)+qv=0 & \mbox{in}\ \Omega ,
\\
v=\phi &\mbox{on}\ \partial\Omega,
\end{array}
\right.$$
is given by
$$v(x)=\int_{\partial\Omega}P(x,y)\phi(y)|a|^{-\frac{1}{2}}(y)d\sigma(y).$$
We fix $b=(b_{i,j})_{1\leq i,j\leq n}\in  C^2(\overline{\Omega};\R^{n^2})$ defined by $b=a^{-1}$. Following \cite[Section 5]{MiT} (see also \cite[Section 5]{HMT}), there exists a purely dimensional constant $C_n>0$ such that, for all  $x\in\Omega$, $y\in\partial\Omega$, we have
$$\abs{P(x,y)-C_n|a|^{\frac{1}{2}}(x)\frac{\left\langle \nu_a(y),(y-x)\right\rangle_{b(x)}}{|x-y|_{b(x)}^n}}\leq C|x-y|_{b(x)}^{1-n},$$
where $C>0$ is a constant depending on $\Omega$, $a$ and $\norm{q}_{L^\infty(\Omega)}$, and
$$\left\langle X,Y\right\rangle_{b(x)}=\sum_{i,j=1}^nb_{ij}(x)X_iY_j,\ |X|_{b(x)}=\sqrt{\left\langle X,X\right\rangle_{b(x)}},\quad X=(X_1,\ldots,X_n),\ Y=(Y_1,\ldots,Y_n)\in\R^n,$$
$$\nu_a(y)=\left(\sum_{i=1}^na_{i1}(y)\nu_i(y),\ldots,\sum_{i=1}^na_{in}(y)\nu_i(y)\right)^T.$$
From the above properties and the maximum principle, in a similar way to \cite[Theorem 1]{Kr}, we can find $c_1,c_2>0$ and $\epsilon_0>0$, depending on $\Omega$, $a$ and $\norm{q}_{L^\infty(\Omega)}$, such that
\bel{poisson2}0\leq P(x,y)\leq c_2\frac{\textrm{dist}(x,\partial\Omega)}{|x-y|^{n}},\quad x\in\Omega,\ y\in\partial\Omega,\ee
\bel{poisson3} c_1\frac{\textrm{dist}(x,\partial\Omega)}{|x-y|^{n}}\leq P(x,y),\quad x\in\Omega,\ y\in\partial\Omega,\ |x-y|\leq\epsilon_0.\ee

Following \cite[Lemma 2.1]{ST}, we can prove the following result.

\begin{lem}\label{l3} Fix $x_0\in\partial\Omega$ and $w\in C^2(\overline{\Omega})\cap H^1_0(\Omega)$. Then the following identity
\bel{l3a}\int_\Omega \left(-|a|^{\frac{1}{2}}\sum_{i,j=1}^n \partial_{x_i}
\left( |a|^{-\frac{1}{2}}a_{i,j}(x) \partial_{x_j} w \right)+qw\right)P(x,x_0)|a|^{-\frac{1}{2}}dx=-\partial_{\nu_a}w(x_0)\ee
holds true.\end{lem}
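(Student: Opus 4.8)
The plan is to establish the identity \eqref{l3a} by an integration-by-parts argument that exploits the defining property of the Poisson kernel $P(x,x_0)$ as a solution of the adjoint (here, formally self-adjoint) equation with a boundary singularity at $x_0$. The key observation is that the operator $L v := -|a|^{1/2}\sum_{i,j}\partial_{x_i}(|a|^{-1/2}a_{i,j}\partial_{x_j}v) + qv$, once we multiply by the weight $|a|^{-1/2}$, becomes the symmetric operator $|a|^{-1/2}Lv = -\sum_{i,j}\partial_{x_i}(|a|^{-1/2}a_{i,j}\partial_{x_j}v) + |a|^{-1/2}qv$, so that $\int_\Omega (|a|^{-1/2}Lw)\,\varphi\,dx$ and $\int_\Omega (|a|^{-1/2}L\varphi)\,w\,dx$ differ only by boundary terms. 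The function $x\mapsto P(x,x_0)$ satisfies $L P(\cdot,x_0)=0$ in $\Omega$, so if it were a smooth test function the symmetry would force the left-hand side of \eqref{l3a} to equal a pure boundary integral involving $w$, and since $w\in H^1_0(\Omega)$ vanishes on $\partial\Omega$, only the normal-derivative term of $w$ would survive.

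Concretely, first I would write the left-hand side of \eqref{l3a} as $\int_\Omega (|a|^{-1/2}Lw)\,P(x,x_0)\,dx$ and apply Green's formula for the symmetric divergence-form operator $\widetilde L \varphi := -\sum_{i,j}\partial_{x_i}(|a|^{-1/2}a_{i,j}\partial_{x_j}\varphi)+|a|^{-1/2}q\varphi$. This yields
\beq
\int_\Omega (\widetilde L w)\,P\,dx - \int_\Omega w\,(\widetilde L P)\,dx = \int_{\partial\Omega}\left( P\,|a|^{-1/2}\sum_{i,j}a_{i,j}\partial_{x_j}w\,\nu_i - w\,|a|^{-1/2}\sum_{i,j}a_{i,j}\partial_{x_j}P\,\nu_i\right)d\sigma.
\eeq
Because $\widetilde L P(\cdot,x_0)=|a|^{-1/2}LP(\cdot,x_0)=0$ in $\Omega$ and $w=0$ on $\partial\Omega$, the bulk term with $w$ and the boundary term with $w$ both drop, leaving $\int_\Omega(\widetilde L w)P\,dx = \int_{\partial\Omega}P(y,x_0)\,|a|^{-1/2}(y)\,\partial_{\nu_a}w(y)\,d\sigma(y)$. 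The final step is to identify this boundary integral: the Poisson kernel $P(\cdot,x_0)$ vanishes on $\partial\Omega$ except at the single point $y=x_0$, where it acts as a Dirac mass whose strength is fixed by the reproducing property $v(x)=\int_{\partial\Omega}P(x,y)\phi(y)|a|^{-1/2}(y)\,d\sigma(y)$; a careful accounting of the normalization (and of the outward-versus-inward normal convention) produces exactly the value $-\partial_{\nu_a}w(x_0)$ on the right of \eqref{l3a}.

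The main obstacle is rigor at the singularity: $P(\cdot,x_0)$ is not a legitimate smooth test function, since it blows up like $\mathrm{dist}(x,\partial\Omega)/|x-x_0|^n$ near $x_0$ as recorded in \eqref{poisson2}–\eqref{poisson3}, so the naive Green's formula above is only formal. To make it precise I would excise a small geodesic ball $B(x_0,\rho)\cap\Omega$ around the singular point, apply Green's formula on $\Omega\setminus B(x_0,\rho)$ where $P$ is smooth, and then let $\rho\to 0$. The contribution over $\partial\Omega\setminus B(x_0,\rho)$ vanishes because $P=0$ on the boundary away from $x_0$, while the integral over the interior sphere $\partial B(x_0,\rho)\cap\Omega$ must be shown to converge to $-\partial_{\nu_a}w(x_0)$; here one uses that $w$ is $C^2$ up to the boundary with $w(x_0)=0$, together with the leading-order asymptotics of $P$ from the estimate stated just before the lemma, so that the concentrating kernel reproduces precisely the conormal derivative of $w$ at $x_0$. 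This limiting argument, which mirrors \cite[Lemma 2.1]{ST}, is exactly where the sharp two-sided Poisson bounds \eqref{poisson2}–\eqref{poisson3} and the regularity $w\in C^2(\overline\Omega)\cap H^1_0(\Omega)$ are indispensable.
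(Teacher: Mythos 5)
Your proposal takes a genuinely different route from the paper, and the route as described has a genuine gap at its decisive step. The paper never tests the equation against the singular kernel $P(\cdot,x_0)$ directly. Instead it approximates the Dirac measure at $x_0$ by smooth densities: it builds $\phi_k\in C^{2+\alpha}(\partial\Omega)$ with $\phi_k|a|^{-1/2}d\sigma\to d\delta_{x_0}$ in total variation, applies Green's identity to the \emph{smooth} solutions $v_k$ with boundary data $\phi_k$ (so no singularity ever enters the integration by parts), and then passes to the limit using only the upper bound \eqref{poisson2}, which gives $P(x,y)\leq c_2|x-y|^{1-n}$ and hence $\norm{v_k-P(\cdot,x_0)}_{L^1(\Omega)}\to0$. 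In this scheme the normalization of the boundary Dirac mass is carried automatically by the measures $\phi_k|a|^{-1/2}d\sigma$, and no pointwise asymptotics of $P$ or $\nabla P$ near the pole are needed.

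Your excision argument, by contrast, requires identifying the limit as $\rho\to0$ of the two integrals over $\partial B(x_0,\rho)\cap\Omega$, and this cannot be done with the estimates you invoke. First, the expansion quoted before the lemma determines $P$ only up to an error $O(|x-x_0|^{1-n})$; on the sphere $|x-x_0|=\rho$ this error is of size $\rho^{1-n}$, which is the \emph{same} order as the leading term itself (the leading term is $\asymp \textrm{dist}(x,\partial\Omega)/\rho^{n}\leq\rho^{1-n}$), and since the sphere has surface measure $\asymp\rho^{n-1}$, the error contributes $O(1)$ --- not $o(1)$ --- to $\int_{\partial B(x_0,\rho)\cap\Omega}P\,\partial_{\nu_a}w\,d\sigma$. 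The limit is therefore not determined by that estimate; it is even compatible with an answer off by a bounded factor. Second, your term $\int_{\partial B(x_0,\rho)\cap\Omega}w\,\partial_{\nu_a}P\,d\sigma$ needs pointwise control of $\nabla_xP(x,x_0)$ near $x_0$; neither \eqref{poisson2}--\eqref{poisson3} nor the stated expansion gives any gradient information, and the bound $|w|\leq C\,\textrm{dist}(\cdot,\partial\Omega)$ does not make this term vanish (heuristically $|\nabla P|\sim\rho^{-n}$, so it also enters the limit at order one). Third, even with sharp asymptotics you would need the exact constant $C_n$ to land precisely on $-\partial_{\nu_a}w(x_0)$; you propose to fix it via the reproducing property, but using that property rigorously is exactly the approximation argument the paper runs, so as written this step is circular or missing. (There is also a sign slip in your intermediate display --- Green's formula yields $-\int_{\partial\Omega}P|a|^{-1/2}\partial_{\nu_a}w\,d\sigma$, and that integral is literally zero since $P(\cdot,x_0)$ vanishes on $\partial\Omega\setminus\{x_0\}$ as a function --- though you flag this as formal.) A repaired version of your proof would need two-sided asymptotics for $P$ and $\nabla P$ with errors $o(\rho^{1-n})$ and $o(\rho^{-n})$, e.g.\ from layer-potential theory; at that point it is considerably heavier than the paper's argument, which is precisely designed to avoid such machinery.
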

\begin{proof} The proof of this result is inspired by \cite[Lemma 2.1]{ST} where a similar result was proved for $q=0$.
For any finite Borel measure $\mu$ on $\partial\Omega$ we denote by $|\mu|$ the absolute value of $\mu$ on $\partial\Omega$.
By applying a partition of unity, boundary flattening transformations
and convolution approximation, we can define a sequence $(\phi_k)_{k\in\mathbb N}$ of non-negative functions lying in $C^{2+\alpha}(\partial\Omega)$ such that
\bel{l3b}\lim_{k\to\infty}\abs{\phi_k|a|^{-\frac{1}{2}}d\sigma-d\delta_{x_0}}(\partial\Omega)=0,\ee
with $d\delta_{x_0}$ the delta measure at $x=x_0$ defined by
$$\int_{\partial\Omega}\psi(y) d\delta_{x_0}(y)=\psi(x_0),\quad \psi\in C(\partial\Omega).$$
For $k\in\mathbb N$, let us consider $v_k\in C^{2+\alpha}(\overline{\Omega})$ solving the boundary value problem
$$\left\{
\begin{array}{ll}
-|a|^{\frac{1}{2}}\sum_{i,j=1}^n \partial_{x_i}
\left( |a|^{-\frac{1}{2}}a_{i,j}(x) \partial_{x_j} v_k \right)+qv_k=0 & \mbox{in}\ \Omega ,
\\
v_k=\phi_k &\mbox{on}\ \partial\Omega.
\end{array}
\right.$$
Integrating by parts, we find
\bel{l3e}\int_\Omega \left(-|a|^{\frac{1}{2}}\sum_{i,j=1}^n \partial_{x_i}
\left( |a|^{-\frac{1}{2}}a_{i,j}(x) \partial_{x_j} w \right)+qw\right)v_k|a|^{-\frac{1}{2}}dx=-\int_{\partial\Omega}\partial_{\nu_a}w \phi_k|a|^{-\frac{1}{2}}(x)d\sigma(x),\quad k\in\mathbb N.\ee
Recall that
$$v_k(x)=\int_{\partial\Omega}P(x,y)\phi_k(y)|a|^{-\frac{1}{2}}(y)d\sigma(y),\quad x\in\Omega,\ k\in\mathbb N.$$
We fix $\tau>0$ and $\Omega_\tau:=\{x\in\Omega:\ \textrm{dist}(x,\partial\Omega)>\tau\}$. Applying \eqref{poisson2}, for all $k\in\mathbb N$, we get
$$\begin{aligned} \norm{v_k-P(\cdot,x_0)}_{L^1(\Omega_\tau)}&\leq \int_{\Omega_\tau}\abs{\int_{\partial\Omega}P(x,y)(\phi_{k}(y)d\sigma(y)-d\delta_{x_0}(y))}\\
&\leq \int_{\Omega_\tau}\left(\int_{\partial\Omega}|P(x,y)|\abs{\phi_{k}d\sigma-d\delta_{x_0}}(y)\right)dx\\
&\leq C\int_{\partial\Omega}\left(\int_{\Omega_\tau}|x-y|^{-(n-1)}dx\right)\abs{\phi_{k}d\sigma-d\delta_{x_0}}(y)\\
& \leq C\left(\sup_{y\in\partial\Omega}\int_{\Omega_\tau}|x-y|^{-(n-1)}dx\right)\abs{\phi_kd\sigma-d\delta_{x_0}}(\partial\Omega)\\
& \leq C\left(\sup_{y\in\partial\Omega}\int_{\Omega}|x-y|^{-(n-1)}dx\right)\abs{\phi_kd\sigma-d\delta_{x_0}}(\partial\Omega).\end{aligned}$$
Moreover, one can check that
$$\sup_{y\in\partial\Omega}\int_{\Omega}|x-y|^{-(n-1)}dx<\infty.$$
Thus, sending $\tau\to0$, we get
$$\norm{v_k-P(\cdot,x_0)}_{L^1(\Omega)}\leq C\abs{\phi_kd\sigma-d\delta_{x_0}}(\partial\Omega),\quad k\in\mathbb N$$
 and applying \eqref{l3b}, we find
$$\lim_{k\to\infty}\norm{v_k-P(\cdot,x_0)}_{L^1(\Omega)}=0.$$
It follows that
$$\begin{aligned}&\lim_{k\to\infty}\int_\Omega \left(-|a|^{\frac{1}{2}}\sum_{i,j=1}^n \partial_{x_i}
\left( |a|^{-\frac{1}{2}}a_{i,j}(x) \partial_{x_j} w \right)+qw\right)v_k|a|^{-\frac{1}{2}}dx\\
&=\int_\Omega \left(-|a|^{\frac{1}{2}}\sum_{i,j=1}^n \partial_{x_i}
\left( |a|^{-\frac{1}{2}}a_{i,j}(x) \partial_{x_j} w \right)+qw\right)P(x,x_0)|a|^{-\frac{1}{2}}dx\end{aligned}$$
and \eqref{l3b} implies that
$$\lim_{k\to\infty}\int_{\partial\Omega}\partial_{\nu_a}w \phi_k|a|^{-\frac{1}{2}}d\sigma(x)=\int_{\partial\Omega}\partial_{\nu_a}w(x) d\delta_{x_0}(x)=\partial_{\nu_a}w(x_0).$$
Combining this with \eqref{l3e}, we get \eqref{l3a}. This completes the proof of the lemma.\end{proof}

\section{Proof of Theorem \ref{t1} and Corollary \ref{c1c}, \ref{c3}}
\subsection{Proof of Theorem \ref{t1}}
We will prove this result by contradiction. For this purpose, we assume that \eqref{t1a} is fulfilled but  the sign of $F=F_1-F_2$ changes a finite number of times on $[0,\delta_0R]$ and $F|_{[0,\delta_0R]}\not\equiv0$. Therefore,  there exists $\delta_1\in(0,\delta_0]$ such that  $F$ is of constant sign on $[0,\delta_1R]$ and $F|_{[0,\delta_1R]}\not\equiv0$. Without loss of generality we may assume that $F\geq0$ on $[0,\delta_1R]$. Fixing $u_\delta=u_{1,\delta}-u_{2,\delta}$ we deduce that $u_\delta$ solves
$$\left\{
\begin{array}{ll}
-|a|^{\frac{1}{2}}\sum_{i,j=1}^n \partial_{x_i}
\left( |a|^{-\frac{1}{2}}a_{i,j}(x) \partial_{x_j} u_\delta(x) \right)+q_\delta(x)u_\delta(x)=F_1(u_{2,\delta}(x))-F_2(u_{2,\delta}(x)) & \mbox{in}\ \Omega ,
\\
u_\delta=0 &\mbox{on}\ \partial\Omega,
\end{array}
\right.$$
with
$$q_\delta(x)=-\int_0^1F_1'(su_{1,\delta}(x)+(1-s)u_{2,\delta}(x))ds,\quad x\in\Omega.$$
Recalling that $F_1'\leq0$ and $F_1\in C^2(\R)$, we deduce that $q_\delta\in C^1(\overline{\Omega})$ and $q_\delta\geq0$. Therefore,   we can consider $P_\delta(x,y)$, $x\in\Omega$, $y\in\partial\Omega$, the Poisson kernel of the equation
 $$-|a|^{\frac{1}{2}}\sum_{i,j=1}^n \partial_{x_i}
\left(|a|^{-\frac{1}{2}} a_{i,j}(x) \partial_{x_j} v(x) \right)+q_\delta(x)v(x)=0$$
with homogeneous Dirichlet boundary condition. Applying Lemma \ref{l3}, we obtain
$$\begin{aligned} &\int_\Omega(F_1(u_{2,\delta}(x))-F_2(u_{2,\delta}(x)))P_\delta(x,x_0)|a|^{-\frac{1}{2}}dx\\
&=\int_\Omega\left(-|a|^{\frac{1}{2}}\sum_{i,j=1}^n \partial_{x_i}
\left(|a|^{-\frac{1}{2}} a_{i,j}(x) \partial_{x_j} u_\delta(x) \right)+q(x)u_\delta(x)\right)P_\delta(x,x_0)|a|^{-\frac{1}{2}}dx\\
&=-\partial_{\nu_a}u_{\delta}(x_0)=\partial_{\nu_a}u_{2,\delta}(x_0)-\partial_{\nu_a}u_{1,\delta}(x_0).\end{aligned}$$
Then, \eqref{t1a} implies
\bel{t1b}\int_\Omega(F_1(u_{2,\delta}(x))-F_2(u_{2,\delta}(x)))P_\delta(x,x_0)|a|^{-\frac{1}{2}}dx=0,\quad \delta\in[0,\delta_1].\ee
On the other hand, for $j=1,2$ and $\delta\in(0,1]$, using the fact that $F_2(0)=0$, we deduce that $u_{2,\delta}$ solves the problem
$$\left\{
\begin{array}{ll}
-|a|^{\frac{1}{2}}\sum_{i,j=1}^n \partial_{x_i}
\left(|a|^{-\frac{1}{2}} a_{i,j}(x) \partial_{x_j} u_{2,\delta}(x) \right)+q_2(x)u_{2,\delta}(x)=0 & \mbox{in}\ \Omega ,
\\
u_\delta=\delta g &\mbox{on}\ \partial\Omega,
\end{array}
\right.$$
with
$$q_2(x)=-\int_0^1F_2'(su_{2,\delta}(x))ds,\quad x\in\Omega.$$
Using the fact that $F_2'\leq0$ and applying the maximum principle we deduce that
$$0\leq u_{2,\delta}(x)\leq \delta R,\quad x\in\Omega.$$
It follows that
$$F_1(u_{2,\delta_1}(x))-F_2(u_{2,\delta_1}(x))\geq 0,\quad x\in\Omega.$$
In the same way, from \eqref{poisson2} we know that $P_{\delta_1}(x,x_0)\geq0$, $x\in\Omega$,
and \eqref{t1b} implies that
\bel{t1c}(F_1(u_{2,\delta_1}(x))-F_2(u_{2,\delta_1}(x)))P_{\delta_1}(x,x_0)=0,\quad x\in\Omega.\ee
By unique continuation properties for elliptic equations, we know that the map $x\mapsto P(x,x_0)$ can not vanishes on any open subset of $\Omega$ and by continuity of the map
$x\mapsto F_1(u_{2,\delta_1}(x))-F_2(u_{2,\delta_1}(x))$ on $\overline{\Omega}$,  we obtain
$$F_1(u_{2,\delta_1}(x))-F_2(u_{2,\delta_1}(x))=0,\quad x\in\overline{\Omega}.$$
Therefore, we have
$$F_1(\delta_1g(x))=F_2(\delta_1g(x)),\quad x\in\partial\Omega$$
and, recalling that $g(\partial\Omega)=[0,R]$, it follows that $F_1=F_2$ on $[0,\delta_1R]$ which contradicts the fact that $F|_{[0,\delta_1R]}\not\equiv0$. This complete the proof of the theorem.
\subsection{Proof of  Corollary \ref{c1c}}
We assume that \eqref{c1c} is fulfilled. Repeating the arguments of Theorem \ref{t1} and applying \eqref{c3c}, we find
$$\int_\Omega F(u_{2,\delta_k}(x))P_k(x,x_0)|a|^{-\frac{1}{2}}dx=\int_\Omega(F_1(u_{2,\delta_k}(x))-F_2(u_{2,\delta_k}(x)))P_k(x,x_0)|a|^{-\frac{1}{2}}dx=0,\quad k\in\mathbb N,$$
where, for all $k\in\mathbb N$,  $P_k$ denotes the Poisson kernel of the equation
 $$-|a|^{\frac{1}{2}}\sum_{i,j=1}^n \partial_{x_i}
\left( |a|^{-\frac{1}{2}}a_{i,j}(x) \partial_{x_j} v(x) \right)+q_k(x)v(x)=0,$$
$$q_k(x)=-\int_0^1F_1'(su_{1,\delta_k}(x)+(1-s)u_{2,\delta_k}(x))ds,\quad x\in\Omega,$$
with homogeneous Dirichlet boundary condition. Fixing $F=F_1-F_2$, we obtain
\bel{c1cb}\int_\Omega F(u_{2,\delta_k}(x))P_k(x,x_0)|a|^{-\frac{1}{2}}dx=0,\quad k\in\mathbb N\ee
and, in a similar way to Theorem \ref{t1}, applying the maximum principle we get
\bel{c1cc}0\leq u_{2,\delta_k}(x)\leq \delta_k R,\quad x\in\Omega,\ k\in\mathbb N.\ee
Using the fact that $F_1$ and $F_2$ are analytic on $\R$, we deduce that $F$ is analytic on $\R$. Moreover, recalling that the sequence $(\delta_k)_{k\in\mathbb N}$ is a decreasing sequence that  converges to zero and applying the isolated zero theorem, we deduce that there exists $k_0\in\mathbb N$ such that $F$ is of constant sign on $[0,\delta_{k_0}R]$. Combining this with \eqref{c1cc}, we obtain that the map $x\mapsto F(u_{2,\delta_{k_0}}(x))$ is of constant sign on $\Omega$ and \eqref{c1cb} implies that
$$F(u_{2,\delta_{k_0}}(x))P_{k_0}(x,x_0)=0,\quad x\in\Omega.$$
Then, repeating the arguments used in the proof of Theorem \ref{t1}, we obtain
$$F(u_{2,\delta_{k_0}}(x))=0,\quad x\in\overline{\Omega}$$
which implies that
$$F(\delta_{k_0}g(x))=F(u_{2,\delta_{k_0}}(x))=0,\quad x\in\partial\Omega.$$
Combining this with the fact that $g(\partial\Omega)=[0,R]$ and applying again the isolated zero theorem, we deduce that $F\equiv0$ which implies that $F_1=F_2$. This completes the proof of the corollary.

\subsection{Proof of  Corollary \ref{c3}}
We assume that \eqref{c3c} is fulfilled for some $\delta\in(0,m/M)$ and we will prove that this condition implies that $F_1=F_2$. We will start by proving that $\ell_1=\ell_2$. For this purpose let us assume the contrary. Without loss of generality we may assume that $\ell_1<\ell_2$. Fixing $F=F_1-F_2$ and recalling that $|h|>0$ on $[0,\delta]\subset[0,1]$, for all $s\in(0,\delta)$, we have
$$|F_2(s)|\leq |b_2||s|^{\ell_2}|h(s)|\leq M |s|^{\ell_2}|h(s)|\leq\frac{M}{m} \delta^{\ell_2-\ell_1}|b_1||s|^{\ell_1}|h(s)|.$$
Now recalling that
$$\frac{M}{m} \delta^{\ell_2-\ell_1}< \frac{M}{m} \left(\frac{m}{M}\right)^{\ell_2-\ell_1}\leq \left(\frac{m}{M}\right)^{\ell_2-1-\ell_1}\leq 1,$$
we deduce that
$$|F_2(s)|\leq\frac{M}{m} \delta^{\ell_2-\ell_1}|b_1||s|^{\ell_1}|h(s)|<|b_1||s|^{\ell_1}|h(s)|=|F_1(s)|,\quad s\in(0,\delta].$$
Thus, we have
$$|F(s)|\geq|F_1(s)|-|F_2(s)|>0,\quad s\in(0,\delta],$$
which implies that
\bel{c3d}|F(s)|>0,\quad s\in(0,\delta).\ee
This implies that $F$ is of constant sign on $[0,\delta]$ and without loss of generality we may assume that $F\geq0$ on $[0,\delta]$. Repeating the arguments of Theorem \ref{t1} and applying \eqref{c3c}, we obtain that
$$\int_\Omega F(u_{2,\delta}(x))P_\delta(x,x_0)|a|^{-\frac{1}{2}}dx=\int_\Omega(F_1(u_{2,\delta}(x))-F_2(u_{2,\delta}(x)))P_\delta(x,x_0)|a|^{-\frac{1}{2}}dx=0,$$
where $P_\delta$ denotes the Poisson kernel of the equation
 $$-|a|^{\frac{1}{2}}\sum_{i,j=1}^n \partial_{x_i}
\left( |a|^{-\frac{1}{2}}a_{i,j}(x) \partial_{x_j} v(x) \right)+q_\delta(x)v(x)=0,$$
$$q_\delta(x)=-\int_0^1F_1'(su_{1,\delta}(x)+(1-s)u_{2,\delta}(x))ds,\quad x\in\Omega,$$
with homogeneous Dirichlet boundary condition. Combining this with the fact that
$$0\leq u_{2,\delta}(x)\leq \delta,\quad F(s)\geq0,\quad x\in\Omega,\ s\in[0,\delta],$$
and repeating the arguments used in Theorem \ref{t1}, we deduce that
$$F(s)=0,\quad s\in[0,\delta].$$
This contradicts \eqref{c3d} and we deduce that $\ell_1=\ell_2$.

In order to complete the proof of the corollary, we only need to show that $b_1=b_2$. Recalling that
$$F(s)=(b_1-b_2)s^{\ell_1}h(s),\quad s\in\mathbb R,$$
and using the fact that $h$ is of constant sign on $[0,1]$, we deduce that $F$ is of constant sign on $[0,\delta]$ and repeating the above argumentation, we deduce that
$$F(s)=0,\quad s\in[0,\delta].$$
and,  combining this with the fact that $|h|>0$ on $[0,\delta]$, we conclude that  $b_1=b_2$. It follows that  $F_1=F_2$, which completes the proof of the corollary.

\section{Proof of Theorem \ref{t3} and Corollary \ref{c2}}
\subsection{Proof of Theorem \ref{t3}}
We fix $F=F_1-F_2$ and we consider $0=s_0<s_1<\ldots<s_N\leq R$ the $N$ points of $[0,R]$ where $F$ changes sign. We fix also $0<r_1<\ldots<r_N\leq R$ such that
$$r_j\in(s_j,s_{j+1}),\quad \sup_{s\in[s_j,s_{j+1}]}|F(s)|=|F(r_j)|,\quad j=0,\ldots,N-1.$$

Without loss of generality, we may assume that $F\geq0$ on $[0,s_1]$. In a similar way to Theorem \ref{t1}, we denote by $P_\delta(x,y)$, $x\in\Omega$, $y\in\partial\Omega$, the Poisson kernel of the equation
 \bel{q1}-|a|^{\frac{1}{2}}\sum_{i,j=1}^n \partial_{x_i}
\left(|a|^{-\frac{1}{2}} a_{i,j}(x) \partial_{x_j} v(x) \right)+q_\delta(x)v(x)=0\ee
with
\bel{q2}q_\delta(x)=-\int_0^1F_1'(su_{1,\delta}(x)+(1-s)u_{2,\delta}(x))ds,\quad x\in\Omega.\ee
Repeating the argumentation of Theorem \ref{t1}, we obtain the following identity
\bel{t3e}\int_\Omega(F_1(u_{2,\delta}(x))-F_2(u_{2,\delta}(x)))P_\delta(x,x_0)|a|^{-\frac{1}{2}}dx=\partial_{\nu_a}u_{2,\delta}(x_0)-\partial_{\nu_a}u_{1,\delta}(x_0),\quad \delta\in[0,r_1/R]\ee
where we recall that
\bel{q3}0\leq u_{j,\delta}(x)\leq \delta R\leq r_1,\quad x\in\Omega,\ \delta\in[0,r_1/R].\ee
For  $\tau\in[0,M]$, let us  consider $v_{\delta,\tau}$ the solution of the boundary value problem
$$\left\{
\begin{array}{ll}
-|a|^{\frac{1}{2}}\sum_{i,j=1}^n \partial_{x_i}
\left( |a|^{-\frac{1}{2}}a_{i,j}(x) \partial_{x_j} v_\tau(x) \right)+\tau v_\tau(x)=0 & \mbox{in}\ \Omega ,
\\
v_\tau=\delta g &\mbox{on}\ \partial\Omega,
\end{array}
\right.$$
Fixing $w_\tau=u_{2,\delta}-v_{\delta,\tau}$, we deduce that $w_\tau$ solves
$$\left\{
\begin{array}{ll}
-\sum_{i,j=1}^n \partial_{x_i}
\left( a_{i,j}(x) \partial_{x_j} w_\tau(x) \right)+\tau w_\tau(x)=(\tau+q_2(x))u_{2,\delta}(x) & \mbox{in}\ \Omega ,
\\
v=\delta g &\mbox{on}\ \partial\Omega,
\end{array}
\right.$$
with
$$q_2(x)=\int_0^1F_2'(su_{2,\delta}(x))ds,\quad x\in\Omega.$$
Using \eqref{t3a}, we deduce that
$$\tau-M\leq \tau+q_2(x)\leq \tau,\quad x\in\Omega$$
and, applying the maximum principle, we find $w_0\geq0$ and $w_M\leq0$. Therefore, we have
$$v_{\delta,M}(x)\leq u_{2,\delta}(x)\leq v_{\delta,0}(x),\quad x\in\Omega.$$
In addition, one can check that the map $\tau\mapsto v_{\delta,\tau}$ is continuous from $[0,M]$ to $C^{2+\alpha}(\overline{\Omega})$ and the mean value theorem implies that, for all $x\in\Omega$, there exists $\tau_x\in[0,M]$ such that $u_{2,\delta}(x)=v_{\delta,\tau_x}(x)$. In particular, we have
$$\inf_{\tau\in[0,M]}F(v_{\delta,\tau}(x))\leq F(v_{\delta,\tau_x}(x))=F(u_{2,\delta}(x)),\quad x\in\Omega$$
and \eqref{t3e} combined with the fact that $P_\delta(x,x_0)\geq0$, $x\in\Omega$, imply
\bel{t3f}\begin{aligned}\int_\Omega \inf_{\tau\in[0,M]} F(v_{\delta,\tau}(x))P_\delta(x,x_0)|a|^{-\frac{1}{2}}dx&\leq \int_\Omega F(u_{2,\delta}(x))P_\delta(x,x_0)|a|^{-\frac{1}{2}}dx\\
&\leq|\partial_{\nu_a}u_{1,\delta}(x_0)-\partial_{\nu_a}u_{2,\delta}(x_0)|,\quad \delta\in[0,r_1/R].\end{aligned}\ee

We fix $x_1\in\partial\Omega$ such that $g(x_1)=R$. We consider two situations, $x_0=x_1$ and $x_0\neq x_1$.\\

\textbf{Case 1}: $x_1=x_0$.

Fix $\epsilon>0$ and consider the set $\mathcal O_\epsilon:= \{x\in\Omega:\ |x-x_1|<\epsilon\}$. Applying the maximum principle on can check that
$$0\leq v_{\tau,\delta}(x)\leq \delta R\leq r_1,\quad x\in\Omega,\ \delta\in[0,r_1/R],\ \tau\in[0,M]$$
which implies that
$$F(v_{\tau,\delta}(x))\geq0,\quad x\in\Omega,\ \delta\in[0,r_1/R],\ \tau\in[0,M].$$
Combining this with \eqref{t3f}, we get
\bel{t3g}\begin{aligned}\int_{\mathcal O_\epsilon} \inf_{\tau\in[0,M]} F(v_{\delta,\tau}(x))P_\delta(x,x_0)|a|^{-\frac{1}{2}}dx&\leq\int_\Omega \inf_{\tau\in[0,M]} F(v_{\delta,\tau}(x))P_\delta(x,x_0)|a|^{-\frac{1}{2}}dx\\
&\leq|\partial_{\nu_a}u_{1,\delta}(x_0)-\partial_{\nu_a}u_{2,\delta}(x_0)|,\quad \delta\in[0,r_1/R].\end{aligned}\ee
Moreover, applying \eqref{t3a} and using the fact that the map $\R\ni\lambda\mapsto v_{\lambda,\tau}$ is linear, for all $\tau\in[0,M]$, we find
\bel{t2aaa}\begin{aligned}\abs{F(v_{\delta,\tau}(x))-F(\delta R)}&\leq M\abs{v_{\delta,\tau}(x)-v_{\delta,\tau}(x_1)}\\
&\leq M\norm{v_{\delta,\tau}}_{w^{1,\infty}(\Omega)}|x-x_1|\\
&\leq M\delta \sup_{\tau\in[0,M]}\norm{v_{1,\tau}}_{w^{1,\infty}(\Omega)}|x-x_1|\\
&\leq C\epsilon,\quad x\in\mathcal O_\epsilon\end{aligned}\ee
with $C>0$ a constant depending on $M$, $g$, $\Omega$, $R$. Thus, recalling that the map $\tau\mapsto v_{\delta,\tau}$ is continuous from $[0,M]$ to $C^{2+\alpha}(\overline{\Omega})$ and $F\in C^1(\R)$, for all $x\in\overline{\Omega}$ we can find $\tau_x\in[0,M]$ such that
$$\inf_{\tau\in[0,M]} F(v_{\delta,\tau}(x))=F(v_{\delta,\tau_x}(x)).$$
Combining this with \eqref{t2aaa}, we obtain
$$\abs{\inf_{\tau\in[0,M]} F(v_{\delta,\tau}(x))-F(\delta R)}\leq C\epsilon,\quad x\in\mathcal O_\epsilon$$
and it follows
$$\begin{aligned} &\int_{\mathcal O_\epsilon}\inf_{\tau\in[0,M]} F(v_{\delta,\tau}(x))P_\delta(x,x_0)|a|^{-\frac{1}{2}}dx\\
&\geq F(\delta R)\int_{\mathcal O_\epsilon} P_\delta(x,x_0)|a|^{-\frac{1}{2}}dx-\int_{\mathcal O_\epsilon} \abs{\inf_{\tau\in[0,M]} F(v_{\delta,\tau}(x))-F(\delta R)}P_\delta(x,x_0)|a|^{-\frac{1}{2}}dx\\
&\leq \left(F(\delta R)-C\epsilon\right)\int_{\mathcal O_\epsilon} P_\delta(x,x_0)|a|^{-\frac{1}{2}}dx.\end{aligned}$$
On the other hand, in view of \eqref{t3a} and \eqref{q3}, the map $q_\delta$, $\delta\in[0,1]$, defined by \eqref{q2} satisfies the estimate
$$|q_\delta(x)|\leq\int_0^1|F_1'(su_{1,\delta}(x)+(1-s)u_{2,\delta}(x))|ds\leq \sup_{r\in[0,R]}|F_1'(r)|\leq M,\quad x\in\Omega,\ \delta\in[0,1].$$
Combing this estimate with the fact that $P_\delta(x,y)$, $x\in\Omega$, $y\in\partial\Omega$, is the Poisson kernel of the equation \eqref{q1} and applying \eqref{poisson3}, we obtain
$$P_\delta(x,x_0)=P_\delta(x,x_1)\geq C\frac{\textrm{dist}(x,\partial\Omega)}{|x-x_1|^n}\geq \frac{C}{|x-x_1|^{n-1}},\quad x\in\mathcal O_\epsilon,$$
with $C>0$ a constant depending on $M$, $a$ and  $\Omega$.
Then, applying \eqref{t3g}, we find
$$\left(F(\delta R)-C\epsilon\right)\int_{\mathcal O_\epsilon}\frac{dx}{|x-x_1|^{n-1}}\leq C|\partial_{\nu_a}u_{1,\delta}(x_0)-\partial_{\nu_a}u_{2,\delta}(x_0)|,\quad \delta\in[0,r_1/R]$$
and it follows that
$$|F(\delta R)|\leq C(\epsilon^{-1}|\partial_{\nu_a}u_{1,\delta}(x_0)-\partial_{\nu_a}u_{2,\delta}(x_0)|+\epsilon),\quad \delta\in[0,r_1/R],$$
$C>0$ a constant depending on $M$, $a$, $g$, $\Omega$, $R$.
Therefore, choosing $\epsilon=|\partial_{\nu_a}u_{1,\delta}(x_0)-\partial_{\nu_a}u_{2,\delta}(x_0)|^{\frac{1}{2}}$ and $s=\delta R$, we get
$$|F(\delta R)|\leq C|\partial_{\nu_a}u_{1,\delta}(x_0)-\partial_{\nu_a}u_{2,\delta}(x_0)|^{\frac{1}{2}},\quad s\in[0,r_1]$$
and it follows that
\bel{t3h}|F(s)|\leq |F(r_1)|\leq C\left(\sup_{\delta\in[0,1]}|\partial_{\nu_a}u_{1,\delta}(x_0)-\partial_{\nu_a}u_{2,\delta}(x_0)|\right)^{\frac{1}{2}},\quad s\in[0,s_1].\ee
Repeating the above argumentation, we obtain
$$\int_\Omega F(u_{2,\delta}(x))P_\delta(x,x_0)|a|^{-\frac{1}{2}}dx=\partial_{\nu_a}u_{1,\delta}(x_0)-\partial_{\nu_a}u_{2,\delta}(x_0),\quad \delta\in[s_1/R,r_2/R]$$
Thus, we have
$$\abs{\int_{u_{2,\delta}\geq s_1} F(u_{2,\delta}(x))P_\delta(x,x_0)|a|^{-\frac{1}{2}}dx}\leq \abs{\partial_{\nu_a}u_{1,\delta}(x_0)-\partial_{\nu_a}u_{2,\delta}(x_0)}+\int_{u_{2,\delta}\leq s_1} |F(u_{2,\delta}(x))|P_\delta(x,x_0)|a|^{-\frac{1}{2}}dx.$$
Using the fact that $F\leq 0$ on $[s_1,s_2]$, for all $\delta\in[s_1/R,r_2/R]$, we get
\bel{t3i}\int_{u_{2,\delta}\geq s_1} |F(u_{2,\delta}(x))|P_\delta(x,x_0)|a|^{-\frac{1}{2}}dx\leq \abs{\partial_{\nu_a}u_{1,\delta}(x_0)-\partial_{\nu_a}u_{2,\delta}(x_0)}+\int_{u_{2,\delta}\leq s_1} |F(u_{2,\delta}(x))|P_\delta(x,x_0)|a|^{-\frac{1}{2}}dx.\ee
Applying \eqref{t3h}, we find
$$|F(u_{2,\delta}(x))|\leq C\left(\sup_{\delta\in[0,1]}|\partial_{\nu_a}u_{1,\delta}(x_0)-\partial_{\nu_a}u_{2,\delta}(x_0)|\right)^{\frac{1}{2}},\quad x\in \{y\in\Omega:\  u_{2,\delta}(y)\leq s_1\}.$$
Combining this with \eqref{poisson2}, we find
$$\begin{aligned}\int_{u_{2,\delta}\leq s_1} |F(u_{2,\delta}(x))|P_\delta(x,x_0)|a|^{-\frac{1}{2}}dx&\leq C\left(\int_{u_{2,\delta}\leq s_1} P_\delta(x,x_0)dx\right)\left(\sup_{\delta\in[0,1]}|\partial_{\nu_a}u_{1,\delta}(x_0)-\partial_{\nu_a}u_{2,\delta}(x_0)|\right)^{\frac{1}{2}}\\
&\leq C\left(\int_{\Omega} |x-x_0|^{1-n}dx\right)\left(\sup_{\delta\in[0,1]}|\partial_{\nu_a}u_{1,\delta}(x_0)-\partial_{\nu_a}u_{2,\delta}(x_0)|\right)^{\frac{1}{2}}\\
&\leq C\left(\sup_{\delta\in[0,1]}|\partial_{\nu_a}u_{1,\delta}(x_0)-\partial_{\nu_a}u_{2,\delta}(x_0)|\right)^{\frac{1}{2}},\end{aligned}$$
with $C>0$ depending on $\Omega$, $M$ and $a$. Combining this with \eqref{t3i}, we get
$$\int_{u_{2,\delta}\geq s_1} |F(u_{2,\delta}(x))|P_\delta(x,x_0)|a|^{-\frac{1}{2}}dx\leq C\left(\sup_{\delta\in[0,1]}|\partial_{\nu_a}u_{1,\delta}(x_0)-\partial_{\nu_a}u_{2,\delta}(x_0)|\right)^{\frac{1}{2}},\quad \delta\in[s_1/R,r_2/R].$$
Now,  using the fact that  $u_{2,\delta}(x_1)=\delta R>s_1$, $\delta\in(s_1/R,r_2/R]$, and choosing $\epsilon>0$ sufficiently small, we get $\mathcal O_\epsilon\subset \{y\in\Omega:\  u_{2,\delta}(y)\geq s_1\}$ and we obtain
$$\int_{\mathcal O_\epsilon} |F(u_{2,\delta}(x))|P_\delta(x,x_0)|a|^{-\frac{1}{2}}dx\leq C\left(\sup_{\delta\in[0,1]}|\partial_{\nu_a}u_{1,\delta}(x_0)-\partial_{\nu_a}u_{2,\delta}(x_0)|\right)^{\frac{1}{2}},\quad \delta\in(s_1/R,r_2/R].$$
Therefore, repeating the above process, we get
$$|F(s)|\leq |F(r_2)|\leq C\left(\sup_{\delta\in[0,1]}|\partial_{\nu_a}u_{1,\delta}(x_0)-\partial_{\nu_a}u_{2,\delta}(x_0)|\right)^{\frac{1}{2^2}},\quad s\in[s_1,s_2].$$
By iteration, we deduce \eqref{t3b}.

\textbf{Case 2}: $x_1\neq x_0$. Let us consider $P_M$ the Poisson kernel associated with the equation
$$-\sum_{i,j=1}^n \partial_{x_i}
\left( a_{i,j}(x) \partial_{x_j}v\right)+Mv=0.$$
Fixing $P(x)=P_\delta(x,x_0)-P_M(x,x_0)$, $\delta\in[0,R]$, we deduce that $P\in C^1(\overline{\Omega}\setminus\{x_0\})$ and
$$-\sum_{i,j=1}^n \partial_{x_i}
\left( a_{i,j}(x) \partial_{x_j}P\right)+MP=(M-q_\delta) P_\delta(x,x_0),\quad x\in\Omega.$$
Moreover, \eqref{t3a} implies that $M-q_\delta\geq0$ and we recall that $P_\delta(\cdot,x_0)\geq0$, which implies that $(M-q_\delta) P_\delta(\cdot,x_0)\geq0$. Combining this with the definition of the Poisson kernel and applying the maximum principle, we deduce that
\bel{t3bc}P_\delta(x,x_0)\geq P_M(x,x_0),\quad \delta\in[0,R],\ x\in\Omega.\ee
Recall that the map $x\mapsto P_M(x,x_0)\in C^1(\overline{\Omega}\setminus\{x_0\})$ and, setting $B(x_0,t):=\{x\in\R^n:\ |x-x_0|<t\}$ with $t=|x_1-x_0|/2$, we have
$$-\sum_{i,j=1}^n \partial_{x_i}
\left( a_{i,j}(x) \partial_{x_j} P_M(x,x_0) \right)+MP_M(x,x_0) =0,\quad x\in\Omega\setminus B(x_0,t),$$
$$P_M(x,x_0)=0,\quad x\in\partial\Omega\setminus B(x_0,t).$$
Moreover, \eqref{poisson2}  implies that
\bel{pos}P_M(x,x_0)>0,\quad x\in\Omega\setminus B(x_0,t).\ee
Indeed, assuming the contrary, we may find $x_\star\in\Omega\setminus B(x_0,t)$ such that $P_M(x_\star,x_0)=0$. Then, applying Harnack inequality we can find an open neighborhood  $U$ of $x_\star$  such that $P_M(x,x_0)=0$, $x\in U$, which combined with unique continuation properties for solutions of elliptic equations implies that $P_M(x,x_0)=0$, $x\in\Omega$. This last property contradicts the definition of $P_M$ and it confirms that \eqref{pos} holds true.
Combining \eqref{pos} with the Hopf lemma, we deduce that
$$\partial_\nu P_M(x,x_0)<0,\quad x\in \partial\Omega\setminus B(x_0,t),$$
where $\partial_\nu$ denotes the normal derivative with respect to $\partial\Omega$.
From this property one can check, by eventually using boundary normal coordinates, that there exists a constant $c$ depending on $\Omega$, $a$, $x_0$ and $M$ such that
\bel{poisson4}P_M(x,x_0)\geq c\ \textrm{dist}(x,\partial\Omega),\quad x\in\overline{\Omega}\setminus B(x_0,t).\ee
Combining this estimate with \eqref{t3bc}, and choosing $\epsilon\in(0,t)$ sufficiently small, we obtain
$$\int_{\mathcal O_\epsilon} P_\delta(x,x_0)|a|^{-\frac{1}{2}}dx\geq \int_{\mathcal O_\epsilon} P_M(x,x_0)|a|^{-\frac{1}{2}}dx\geq c \int_{\mathcal O_\epsilon} \textrm{dist}(x,\partial\Omega)dx\geq  c'\epsilon^{n+1},$$
with $c'>0$ depending on $\Omega$, $a$, $x_0$ and $M$. Therefore, repeating the above argumentation, we get
$$|F(\delta R)|\leq C(\epsilon^{-n-1}|\partial_{\nu_a}u_{1,\delta}(x_0)-\partial_{\nu_a}u_{2,\delta}(x_0)|+\epsilon),\quad \delta\in[0,r_1/R]$$
and, choosing $\epsilon=|\partial_{\nu_a}u_{1,\delta}(x_0)-\partial_{\nu_a}u_{2,\delta}(x_0)|^{\frac{1}{n+2}}$ and $s=\delta R$, we obtain
$$|F(s)|\leq C\left(\sup_{\delta\in[0,1]}|\partial_{\nu_a}u_{1,\delta}(x_0)-\partial_{\nu_a}u_{2,\delta}(x_0)|\right)^{\frac{1}{n+2}},\quad s\in[0,r_1].$$
Using this estimate and repeating the arguments of the first case, we get
$$|F(s)|\leq C\left(\sup_{\delta\in[0,1]}|\partial_{\nu_a}u_{1,\delta}(x_0)-\partial_{\nu_a}u_{2,\delta}(x_0)|\right)^{\frac{1}{(n+2)^2}},\quad s\in[0,r_2]$$
and by iteration we obtain \eqref{t3c}.
This completes the proof of Theorem \ref{t3}.
\subsection{Proof of Corollary \ref{c2}}
Since $F_1-F_2$ is of constant sign, without loss of generality we may assume that $F_1-F_2\geq0$. Then, repeating the arguments used for \eqref{t3e} with $\delta=1$ we obtain
$$\begin{aligned}\int_\Omega\abs{F_1(u_{2,1}(x))-F_2(u_{2,1}(x))}P_\delta(x,x_0)|a|^{-\frac{1}{2}}dx&=\int_\Omega F_1(u_{2,1}(x))-F_2(u_{2,1}(x)))P_\delta(x,x_0)|a|^{-\frac{1}{2}}dx\\
&=\partial_{\nu_a}u_{2,1}(x_0)-\partial_{\nu_a}u_{1,1}(x_0)\\
&\leq\abs{\partial_{\nu_a}u_{2,1}(x_0)-\partial_{\nu_a}u_{1,1}(x_0)}.\end{aligned}$$
Combining this estimate with the arguments used in the proof of Theorem \ref{t3}, we obtain \eqref{c2a}-\eqref{c2b}. This completes the proof of Corollary \ref{c2}.

\section{Numerical reconstruction}
\subsection{Computation algorithm }\label{sec5.1}
We will consider an algorithm of reconstruction associated with our inverse problems. More precisely, fixing the set
$$\mathcal B:=\{G|_{[0,R]}:\ G\in C^1(\R),\ G(0)=0,\ G'\leq0\}$$
for any $F\in \mathcal B_0$ and any $\delta\in[0,1]$ we denote by $u_{\delta,F}$ the solution of \eqref{eq1}. Then, fixing the set
$$\mathcal B_\delta:=\{G|_{[0,\delta R]}:\ G\in \mathcal B,\ G'<0\},\quad \delta\in[0,1],$$
$\Gamma$ an open subset of $\partial\Omega$ and the real values $0<\delta_1<\ldots<\delta_N=1$, we will consider the numerical reconstruction of $F\in \mathcal B_1$ from the data $\partial_{\nu_a}u_{\delta_k,F}|_\Gamma$, $k=1,\ldots,N$. In contrast to our theoretical results, for practical reason, we need to extend the measurements at one point to  measurement on the subset $\Gamma$ of $\partial\Omega$. However, in accordance with our theoretical results, the open set $\Gamma$ can chosen as small as possible. The numerical algorithm will be based on Tikonov regularization in a process that will be described bellow.

But first let us consider $d_\delta$ the map defined on $\mathcal B_\delta\times\mathcal B_\delta$ by
$$d_\delta(G,F)=\left(\int_\Omega|G(u_{\delta,G}(x))-F(u_{\delta,F}(x))|^2|a|^{-\frac{1}{2}}dx\right)^{\frac{1}{2}},\quad (G,F)\in \mathcal B_\delta\times\mathcal B_\delta.$$
Recall that by the maximum principle, we have $0\leq u_{\delta,F},u_{\delta,G}\leq\delta R$ and the map $d_\delta$ is suitably defined on $\mathcal B_\delta\times\mathcal B_\delta$. We can also show the following.

\begin{lem}\label{l1} The map $d_\delta$ is a metric on $\mathcal B_\delta\cup\{0\}$.\end{lem}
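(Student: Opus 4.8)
The plan is to verify the three defining properties of a metric for $d_\delta$ on $\mathcal B_\delta\cup\{0\}$: nonnegativity together with the identity of indiscernibles, symmetry, and the triangle inequality. Symmetry is immediate from the symmetric role of $G$ and $F$ in the integrand, and nonnegativity is clear since $d_\delta$ is defined as the square root of an integral of a nonnegative quantity. The triangle inequality will follow from the fact that $d_\delta(G,F)$ is exactly the $L^2(\Omega,|a|^{-1/2}dx)$-distance between the two functions $x\mapsto G(u_{\delta,G}(x))$ and $x\mapsto F(u_{\delta,F}(x))$; since the standard $L^2$ norm satisfies the triangle inequality via Minkowski's inequality, and the assignment $G\mapsto (x\mapsto G(u_{\delta,G}(x)))$ sends each element of $\mathcal B_\delta\cup\{0\}$ to a well-defined element of that $L^2$ space, the triangle inequality for $d_\delta$ is inherited directly.

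The only nontrivial point, and the one I expect to be the main obstacle, is the identity of indiscernibles: I must show that $d_\delta(G,F)=0$ forces $G=F$ as elements of $\mathcal B_\delta$ (or that one of them equals $0$). The vanishing of the integral, combined with continuity of the integrand and positivity of $|a|^{-1/2}$, gives $G(u_{\delta,G}(x))=F(u_{\delta,F}(x))$ for all $x\in\overline{\Omega}$. The subtlety is that this is an equality between the \emph{source terms} evaluated along two a priori different solutions $u_{\delta,G}$ and $u_{\delta,F}$, so it does not immediately read off as $G=F$. To bridge this gap I would argue that the common value of these two source terms, call it $w(x):=G(u_{\delta,G}(x))=F(u_{\delta,F}(x))$, is then the same right-hand side for both boundary value problems \eqref{eq1} with data $\delta g$; hence $u_{\delta,G}$ and $u_{\delta,F}$ solve the identical linear problem
$$-|a|^{\frac{1}{2}}\sum_{i,j=1}^n \partial_{x_i}\left( |a|^{-\frac{1}{2}}a_{i,j}(x) \partial_{x_j} u(x) \right)=w(x)\quad\mbox{in }\Omega,\qquad u=\delta g\ \mbox{on }\partial\Omega,$$
whose solution is unique, so $u_{\delta,G}=u_{\delta,F}=:u$.

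With $u_{\delta,G}=u_{\delta,F}=u$ established, the equality of source terms becomes $G(u(x))=F(u(x))$ for all $x\in\overline{\Omega}$, i.e.\ $G$ and $F$ agree on the range $u(\overline{\Omega})$. Here the strict monotonicity built into $\mathcal B_\delta$ (the condition $G'<0$) is the crucial ingredient: by the maximum principle $0\le u\le \delta R$, and $u$ attains the value $\delta R$ on the part of $\partial\Omega$ where $\delta g$ achieves its maximum and attains $0$ where $\delta g$ vanishes, so by continuity and connectedness of $\Omega$ the range $u(\overline{\Omega})$ is the full interval $[0,\delta R]$. Therefore $G=F$ on all of $[0,\delta R]$, which is exactly equality in $\mathcal B_\delta$. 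The degenerate case where one argument is $0$ is handled along the same lines, using $G(0)=0$ and the fact that $u_{\delta,0}\equiv 0$ when the source vanishes, so that $d_\delta(G,0)=0$ forces $G\equiv 0$ on $[0,\delta R]$. I would present the triangle inequality and symmetry briefly and devote the bulk of the write-up to this identity-of-indiscernibles argument, taking care to justify that the range of $u$ fills $[0,\delta R]$ so that pointwise agreement of $G$ and $F$ along $u$ upgrades to agreement as functions on the full interval.
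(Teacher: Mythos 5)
Your proof is correct, but the key injectivity step takes a genuinely different route from the paper's. You first prove $u_{\delta,G}=u_{\delta,F}$: the pointwise identity $G(u_{\delta,G}(x))=F(u_{\delta,F}(x))=:w(x)$ in $\Omega$ makes both functions solutions of the \emph{same} linear Dirichlet problem with source $w$ and boundary data $\delta g$, so uniqueness for that linear problem identifies them; only then do you conclude $G=F$ on the range of the common solution. The paper avoids the interior entirely: since the identity holds on $\overline{\Omega}$ by continuity, it can simply be restricted to $\partial\Omega$, where both solutions are known to equal $\delta g$, yielding $G(\delta g(x))=F(\delta g(x))$ for $x\in\partial\Omega$; the standing hypothesis $g(\partial\Omega)=[0,R]$ then gives $G=F$ on $[0,\delta R]$ immediately. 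Your detour is legitimate and buys the extra conclusion $u_{\delta,G}=u_{\delta,F}$, but the boundary-trace shortcut shows that neither the linear-uniqueness step nor connectedness of $\Omega$ is actually needed: the boundary values alone already sweep out all of $[0,\delta R]$. Your handling of symmetry and the triangle inequality, by pulling back the $L^2(\Omega,|a|^{-\frac{1}{2}}dx)$ metric under $G\mapsto G(u_{\delta,G})$, is exactly right and is what the paper leaves implicit when it says one ``only'' needs to check the identity of indiscernibles.

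Two small misstatements in your write-up, neither of which derails the argument. First, $u_{\delta,0}\not\equiv 0$: with vanishing source the solution is the $a$-harmonic extension of the boundary data $\delta g$, which is nonzero for $\delta>0$. This is harmless, since the zero \emph{function} composed with anything is zero, so $d_\delta(G,0)=0$ still forces $G(u_{\delta,G})\equiv 0$, and the same boundary (or range) argument gives $G\equiv 0$ on $[0,\delta R]$. Second, strict monotonicity $G'<0$ is not the crucial ingredient you claim it is: the non-strict monotonicity and $G(0)=0$ already give the maximum-principle bounds $0\leq u_{\delta,G}\leq \delta R$, and what makes the range fill the whole interval $[0,\delta R]$ is the assumption $g(\partial\Omega)=[0,R]$, not any monotonicity of $G$.
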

\begin{proof} We only need to check that for any $F,G\in \mathcal B_\delta\cup\{0\}$ the following implication
$$d_\delta(F,G)=0\Longrightarrow F=G$$
holds true. For this purpose, let us consider $F,G\in \mathcal B_\delta\cup\{0\}$ such that $d_\delta(F,G)=0$. Then, we have
$$F(u_{\delta,F}(x))=G(u_{\delta,G}(x)),\quad x\in\Omega$$
and, sending $x$ to $\partial\Omega$, we get
$$F(\delta g(x))=F(u_{\delta,F}(x))=G(u_{\delta,G}(x))=G(\delta g(x)),\quad x\in\partial\Omega.$$
Combining this with the fact that $g(\partial\Omega)=[0,R]$, we deduce that $F=G$.
\end{proof}

Using this property, we will consider an algorithm of reconstruction based on the minimization of the quantity
$$J(F)=\sum_{k=1}^N\left(\norm{(\partial_{\nu_a}u_{\delta_k,F}-m_k)\chi^{\frac{1}{2}}|a|^{-\frac{1}{4}}}_{L^2(\partial\Omega)}^2+\lambda d_{\delta_k}(F,0)^2\right),$$
with $m_1,\ldots,m_N\in H^{\frac{1}{2}}(\partial\Omega)$ the noisy data associated with the Dirichlet excitations $\delta_1g,\ldots,\delta_Ng$, $\lambda\geq0$ is a regularization parameter and $\chi$ a smooth non-negative cut-off function with supp$(\chi)\subset\Gamma$.

We consider the minimization of the quantity $J(F)$. For this purpose, we will determine first the map $G=F(u_F)$ that we see as a source term. Then, from $G$ we determine the corresponding term by considering the identity
$$F(s)=G(x),\quad x\in\{y\in\Omega:\ u_F(x)=s\}.$$
More precisely, we consider the set $\mathcal B_1$ as an open subset of the Banach space of function $F\in C^1([0,R])$ satisfying $F(0)=0$. Then, we consider the Fr\'echet derivative of $J(F)$ with respect to $F$ at the direction $H$ given by
$$\begin{aligned}DJ(F)H=&2\sum_{k=1}^N\int_{\partial\Omega} \partial_{\nu_a}D_Fu_{\delta_k,F}H(\partial_{\nu_a}u_{\delta_k,F}-m_k)\chi(x) |a|^{-\frac{1}{2}}d\sigma(x)\\
&+2\lambda\sum_{k=1}^N\int_\Omega (H(u_{\delta_k,F}(x))+F'(u_{\delta_k,F}(x))D_Fu_{\delta_k,F}H)F(u_{\delta_k,F}(x))|a|^{-\frac{1}{2}}dx,\end{aligned}$$
where $w_k=D_Fu_{\delta_k,F}H$ solves the boundary value problem
$$\left\{
\begin{array}{ll}
-|a|^{\frac{1}{2}}\sum_{i,j=1}^n \partial_{x_i}
\left( |a|^{-\frac{1}{2}}a_{i,j}(x) \partial_{x_j} w_k \right)-F'(u_{\delta_k,F}(x))w_k=H(u_{\delta_k,F}(x)) & \mbox{in}\ \Omega ,
\\
w_k=0 &\mbox{on}\ \partial\Omega.
\end{array}
\right.$$
For $k=1,\ldots,N$, we fix $z_k$ the solution of the boundary value problem
$$\left\{
\begin{array}{ll}
-|a|^{\frac{1}{2}}\sum_{i,j=1}^n \partial_{x_i}
\left( |a|^{-\frac{1}{2}}a_{i,j}(x) \partial_{x_j} z_k \right)=0 & \mbox{in}\ \Omega ,
\\
z_k=(\partial_{\nu_a}u_{\delta_k,F}-m_k)\chi &\mbox{on}\ \partial\Omega
\end{array}
\right.$$
and integrating by parts we obtain
$$\begin{aligned}&\int_{\partial\Omega} \partial_{\nu_a}D_Fu_{\delta_k,F}H(\partial_{\nu_a}u_{\delta_k,F}-m_k)\chi(x) |a|^{-\frac{1}{2}}d\sigma(x)\\
&=\int_\Omega \sum_{i,j=1}^n \partial_{x_i}
\left( |a|^{-\frac{1}{2}}a_{i,j}(x) \partial_{x_j} D_Fu_{\delta_k,F}H \right)z_kdx\\
&=-\int_\Omega (\left(F'(u_{\delta_k,F}(x))D_Fu_{\delta_k,F}H+H(u_{\delta_k,F}(x))\right)z_k)|a|^{-\frac{1}{2}}dx.\end{aligned}$$
Therefore, we have
$$DJ(F)H=2\sum_{k=1}^N\int_\Omega \left(F'(u_{\delta_k,F}(x))D_Fu_{\delta_k,F}H+H(u_{\delta_k,F}(x))\right)(\lambda F(u_{\delta_k,F}(x))-z_k(x))|a|^{-\frac{1}{2}}dx$$
and the condition
\bel{num1} z_k(x)=\lambda F(u_{\delta_k,F}(x)),\quad k=1,\ldots,N,\ x\in\Omega,\ee
guaranties that $DJ(F)=0$. We will consider a solution $F$ of this nonlinear equation by treating first the determination of $G_k(x)=F(u_{\delta_k,F}(x))$. More precisely, we choose $F_0\in\mathcal B_1$ and we solve \eqref{eq1} for $\delta=\delta_k$, $k=1,\ldots,N$, and $F=F_0$. Then, we fix $G_{k,0}(x)=F_0(u_{\delta_k,F_0}(x))$ and, fixing $M>0$, we consider the following thresholding iteration process (see also \cite{LJY,YLY})
\begin{equation}\label{iter}
G_{k,\ell+1}=\frac{\phi_{k,\ell}}{\lambda+M}+\frac{M G_{k,\ell}}{\lambda+M},
\end{equation}
where $\phi_{k,\ell}$ solves the problem
$$\left\{
\begin{array}{ll}
-|a|^{\frac{1}{2}}\sum_{i,j=1}^n \partial_{x_i}
\left( |a|^{-\frac{1}{2}}a_{i,j}(x) \partial_{x_j} \phi_{k,\ell} \right)=0 & \mbox{in}\ \Omega ,
\\
\phi_{k,\ell}=(\partial_{\nu_a}\psi_{k,\ell}-m_k)\chi &\mbox{on}\ \partial\Omega
\end{array}
\right.$$
with $\psi_{k,\ell}$ the solution of
$$\left\{
\begin{array}{ll}
-|a|^{\frac{1}{2}}\sum_{i,j=1}^n \partial_{x_i}
\left( |a|^{-\frac{1}{2}}a_{i,j}(x) \partial_{x_j} \psi_{k,\ell} \right)=G_{k,\ell} & \mbox{in}\ \Omega ,
\\
\psi_{k,\ell}=\delta_k g &\mbox{on}\ \partial\Omega.
\end{array}
\right.$$

Fixing $\epsilon>0$, we stop this iteration process when the condition
\begin{equation}\label{iterc}
\norm{(G_{k,\ell+1}-G_{k,\ell})|a|^{-\frac{1}{4}}}_{L^2(\Omega)}\leq \epsilon \norm{G_{k,\ell}|a|^{-\frac{1}{4}}}_{L^2(\Omega)}
\end{equation}
is fulfilled. Then, we solve the problem
$$\left\{
\begin{array}{ll}
-|a|^{\frac{1}{2}}\sum_{i,j=1}^n \partial_{x_i}
\left( |a|^{-\frac{1}{2}}a_{i,j}(x) \partial_{x_j} \psi_{k,\ell} \right)=G_{k,\ell} & \mbox{in}\ \Omega ,
\\
\psi_{k,\ell}=\delta_k g &\mbox{on}\ \partial\Omega.
\end{array}
\right.$$
and, fixing $m=0,\ldots,N-1$ and $s\in(\delta_m R,\delta_{m+1}R]$, we consider $F$ to be the function given by
\begin{equation}\label{Fs}
F(s)=\frac{1}{N-m} \sum_{k=m+1}^NG_{k,\ell}(x),\quad x\in\{x\in\Omega:\ \psi_{m+1,\ell}(x)=\ldots=\psi_{N,\ell}(x)=s\},
\end{equation}
where we fix $\delta_0=0$.

\subsection{Numerical experiments}
In this section, we will apply the reconstruction algorithm presented in Section \ref{sec5.1} to the numerical solution of problem \eqref{eq1}. We will present several numerical examples demonstrating the algorithm's performance in determining the unknown function $F$.

To provide a more concrete and illustrative example, we consider a 2D semilinear elliptic equation as the following:
\begin{align}\label{problem}
\left\{
\begin{array}{ll}
-\Delta u_{\delta}(x,y)=F(u_{\delta}),  &{(x,y)\in (0,1)\times(0,1),}\\
u_{\delta}|_{x=0}=\delta y,~u_{\delta}|_{x=1}=\delta y,  &{y\in[0,1]},\\
u_{\delta}|_{y=0}=0,~u_{\delta}|_{y=1}=\delta,  &{x\in[0,1]},\\
\end{array} \right.
\end{align}
In this problem, the domain $\Omega$ is the unit square $(0,1)\times(0,1)$, coefficient
$a=1$, and the boundary conditions are satisfied $g(\partial\Omega)=[0,1]$ with $R=1$.


We use Picard-Successive over relaxation (Picard-SOR) iteration method to solve the forward problem \eqref{problem} as \cite{BJK},
\[\Delta  v^{k+1}=F(v^{k}).\]
As is customary in discretizing the problem for a numerical solution, the Laplacian operator is replaced at the interior grid points of the computational domain by a second-order central difference approximation
\begin{equation}
(v_{m+1,n}+v_{m,n+1}+v_{m-1,n}+v_{m,n-1}-4v_{m,n})/h^2,
\end{equation}
where $v_{m,n}$ represents the function value at the grid point $(m\Delta x,n\Delta y)$ and $\Delta x=\Delta y=h$ are the grid spacing in the $x$ and $y$ directions, respectively.  We set $h$ equal to $1/32$ in inverse problem and $1/64$ in forward problem in order to avoid the "inverse crimes".


We generate the noisy data $m_{k}$ by adding uniform random noises at exact data $\partial_{\nu_a}u_{\delta_k,F}|_\Gamma$
in such a way that
\[m_{k}=\partial_{\nu_a}u_{\delta_k,F}|_\Gamma+\varepsilon_{k}\; rand(-1,1),\]
where $rand(-1,1)$ denotes the random number uniformly distributed in $[-1,1]$. For the noise level $\varepsilon_{k}>0$, we choose it as a certain portion of the amplitude of the noiseless data. Specifically, we define the noise level as:
\[\varepsilon_{k}:=\varepsilon_0~ \max|(\partial_{\nu_a}u_{\delta_k,F}|_\Gamma)|,~0<\varepsilon_0<1,\]
where $\varepsilon_0$ is a parameter that controls the relative noise level added to the data.

 In our numerical examples, the selection of the parameter $M$ in Equation \eqref{iter} as well as the regularization parameters $\lambda$ are determined through repeated trials to achieve the best inversion results, as discussed in the prior some works \cite{LJY,YLY}.
  Furthermore, we fix the iteration stopping tolerance $\epsilon$ at $10^{-3}$ in Equation \eqref{iterc}.
When we reconstruct $F$ using formula \eqref{Fs},
the selected points are chosen as $s_m = \delta_m+\frac{1}{2N}$, $m=0,\cdots,N-1$, i.e. the midpoint of the interval $(\delta_m R,\delta_{m+1}R]$.
The relative $L^2$ error is defined as:
\[err=\frac{\|F-\hat{F}\|_{L^2(\Omega)}}{\|F\|_{L^2(\Omega)}},\]
where $F$ is the true solution and $\hat{F}$ is the reconstructed result obtained using our algorithm.
\begin{example}\label{example1}
In this example, we fix the true solution to be $F=-u^3$. Then evaluate the reconstruction performance by considering different combinations of relative noise levels $\varepsilon_0$ and different initial guesses for our algorithm. We take $N=30$, and $\delta_k=\frac{k}{N}$, $k=1,2,\cdots,N$. For each value of $\delta_k$, we choose the measurement data $\partial_{\nu_a}u_{\delta_k,F}|_{\Gamma_1}$ on the right boundary of the domain $\Omega$, i.e., $\Gamma_1=\{(x,y)\in\Omega |x=1\}$. The specific choices of these parameters as well as the resulting numerical performance are listed in the table \ref{table1}.
\end{example}
\renewcommand{\arraystretch}{1.1}
\begin{table}[!htbp]
\small
\caption{Parameter settings and the corresponding numerical performances in Example \ref{example1}.}
\centering
\begin{tabular}{cccccc}
\toprule[2pt] 
Initial guess & $\varepsilon_0(\%)$ & M & $\lambda$ & err(\%) & Illustration \\
\midrule
\multirow{3}{*}{$F_0=-u$}&  0.5 & 0.8 & $9.2\times 10^{-4}$ & 6.83& \multirow{3}{*}{Figure \ref{fig1:subfig1}}\\
                          & 1 & 0.8 & $9.2\times 10^{-4}$ & 7.35\\
                          & 5 & 0.8 & $9.2\times 10^{-4}$ & 8.82\\
\midrule 
\multirow{3}{*}{$F_0=-u^2$}& 0.5 & 0.8 & $9.4\times 10^{-4}$ & 4.06 & \multirow{3}{*}{Figure \ref{fig1:subfig2}}\\
                            & 1 & 0.8 & $9.3\times 10^{-4}$ & 4.44\\
                            & 5 & 0.8 & $9.1\times 10^{-4}$ & 5.82\\
\bottomrule[2pt] 
\end{tabular}\label{table1}
\end{table}
\begin{figure}[htbp]
\centering
\subfigure[]{
\label{fig1:subfig1}
\includegraphics[width=0.45\textwidth,trim=4 4 35 15,clip]{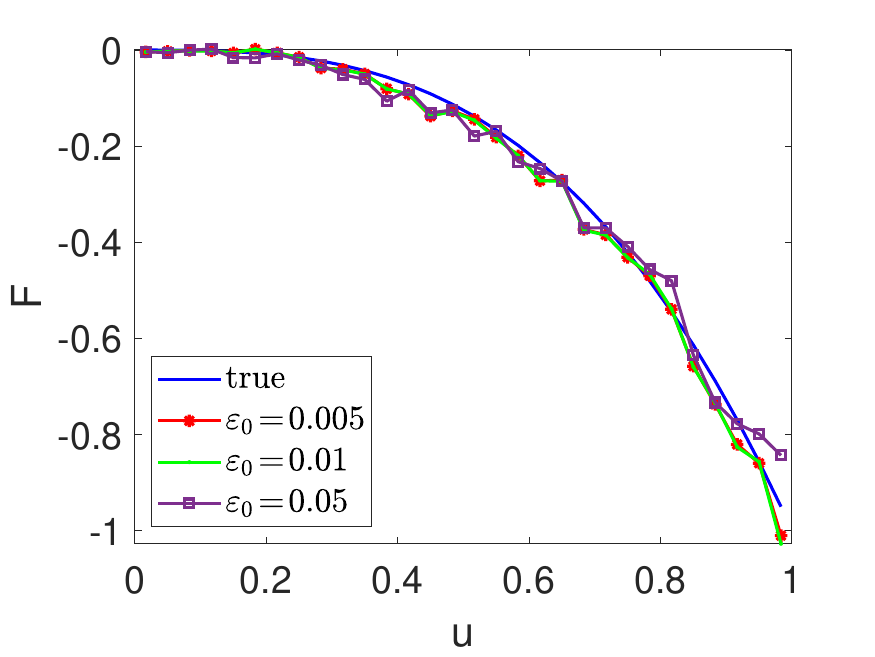}
}
\quad
\subfigure[]{
\label{fig1:subfig2}
\includegraphics[width=0.45\textwidth,trim=4 4 35 15,clip]{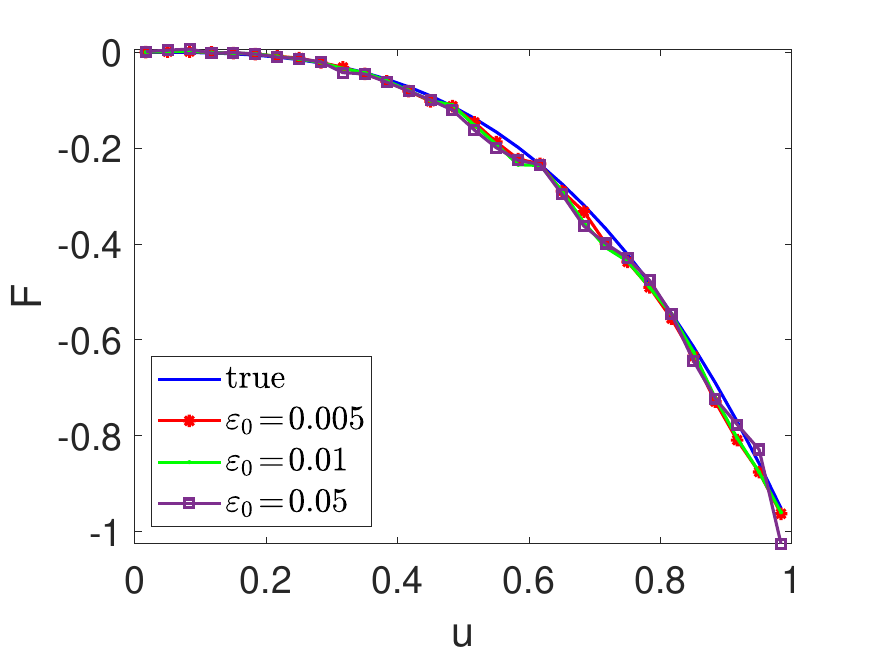}
}
\caption{The true and inversion results of Example \ref{example1}: $(a)~ F_0=-u$, $(b)~ F_0=-u^2$.}
\label{figure1}
\end{figure}
From the analysis of Figure \ref{figure1}, we can see that the performance of our algorithm depends on the choice of the initial guess. The closer the initial guesses are to the true solution $F$, the more accurate the inversion results will be.

Furthermore, our repeated numerical experiments have discovered an important finding only when the initial guess is strictly smaller than the true solution can obtain a reasonable result. This shows the possible existence of local optimal solutions. The algorithm may converge to these solutions with different initial values.
This suggests that the initial guess plays a crucial role in determining the final reconstruction results.

Observed Table \ref{table1}, as the noise level $\varepsilon_0$ increasing from $0.5\%$, $1\%$ to $5\%$, the relative error of the inversion results increase only moderately. This indicates that the algorithm has robustness against the measurement errors.


\begin{example}\label{example2}
In this example, we study the influence of the properties of the true solution on the reconstruction performance. Specifically, we fix the initial guess $F_0 = -u$ and consider different true function $F$:
$(a)~ F^a=-\ln(1+u)$,
$(b)~ F^b=(1-e^u)/2$,
$(c)~ F^c=-\sin (u)$
and $(d)~ F^d=(\cos (\pi u)-1)/2.5$.
By evaluating the reconstruction performance across a range of true function types, we can gain insights into how the algorithm behaves under various
function with different properties.
Furthermore, we assess the algorithm's performance at different relative noise levels $\varepsilon_0$ in the noise data.
In this example, we take $N=30$, and measurement data on $\Gamma_1$ as Example \ref{example1}. The specific parameter choices and the resulting numerical performance are displayed in Table \ref{table2}.
\end{example}
\renewcommand{\arraystretch}{1.1}
\begin{table}[!htbp]
\caption{Parameter settings and the corresponding numerical performances in Example \ref{example2}.}
\centering
\begin{tabular}{cccccc}
\toprule[2pt] 
True solution $F$ & $\varepsilon_0(\%)$ & M & $\lambda$ & err(\%) & Illustration \\
\midrule 
\multirow{3}{*}{$-\ln(1+u)$} & 0.5 & 0.5 & 7.02$\times 10^{-4}$ & 2.2 & \multirow{3}{*}{Figure \ref{fig2:subfig1}} \\
                            & 1 & 0.5 & 7$\times 10^{-4}$ & 2.27\\
                            & 5 & 0.5 & 7$\times 10^{-4}$ & 3.81\\
\midrule
\multirow{3}{*}{$(1-e^u)/2$} & 0.5 & 0.8 & 1.03$\times 10^{-3}$ & 4.03 & \multirow{3}{*}{Figure \ref{fig2:subfig2}} \\
                            & 1 & 0.8 & 1.03$\times 10^{-3}$ & 4.08\\
                            & 5 & 0.8 & 1$\times 10^{-3}$ & 5.15\\

\midrule
\multirow{3}{*}{$-\sin (u)$} & 0.5 & 0.5 & 6.95$\times 10^{-4}$ & 1.74 & \multirow{3}{*}{Figure \ref{fig2:subfig3}} \\
                            & 1 & 0.5 & 6.9$\times 10^{-4}$ & 1.81\\
                            & 5 & 0.5 & 6.4$\times 10^{-4}$ & 3.08\\

\midrule
\multirow{3}{*}{$(\cos (\pi u)-1)/2.5$} & 0.5 & 0.1 & 1.33$\times 10^{-4}$ & 3.73 & \multirow{3}{*}{Figure \ref{fig2:subfig4}} \\
                            & 1 & 0.1 & 1.32$\times 10^{-4}$ & 3.81\\
                            & 5 & 0.1 & 1.28$\times 10^{-4}$ & 5.67\\
\bottomrule[2pt] 
\end{tabular}\label{table2}
\end{table}
\begin{figure}[!htbp]
\centering
\subfigure[]{
\label{fig2:subfig1}
\includegraphics[width=0.45\textwidth,trim=4 4 35 15,clip]{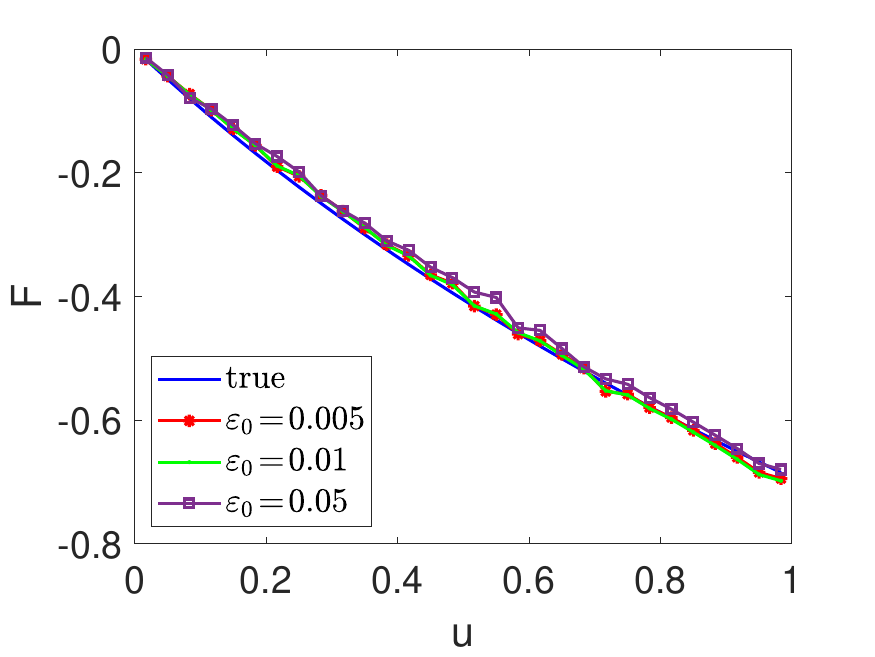}
}
\quad
\subfigure[]{
\label{fig2:subfig2}
\includegraphics[width=0.45\textwidth,trim=4 4 35 15,clip]{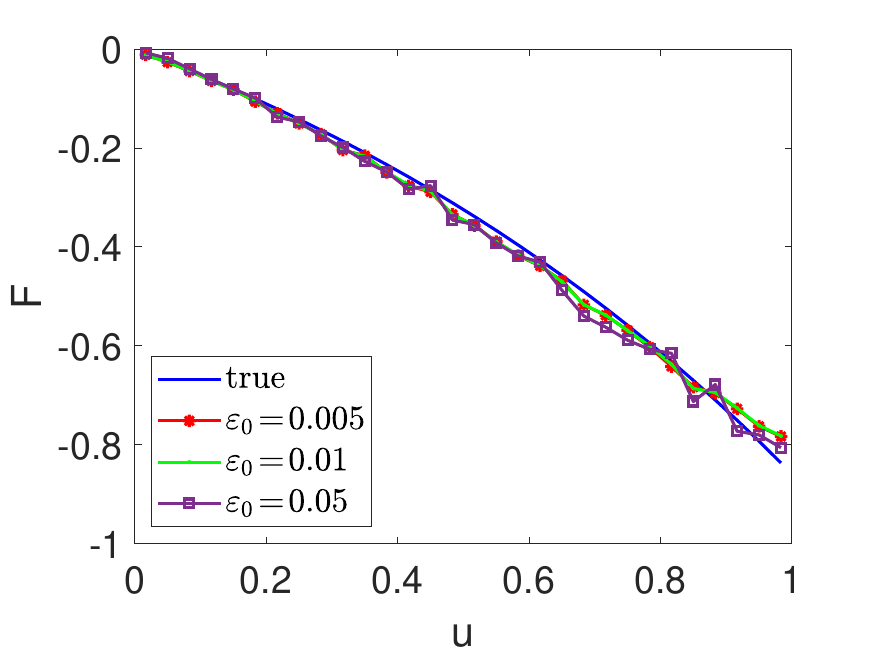}
}
\quad
\subfigure[]{
\label{fig2:subfig3}
\includegraphics[width=0.45\textwidth,trim=4 4 35 15,clip]{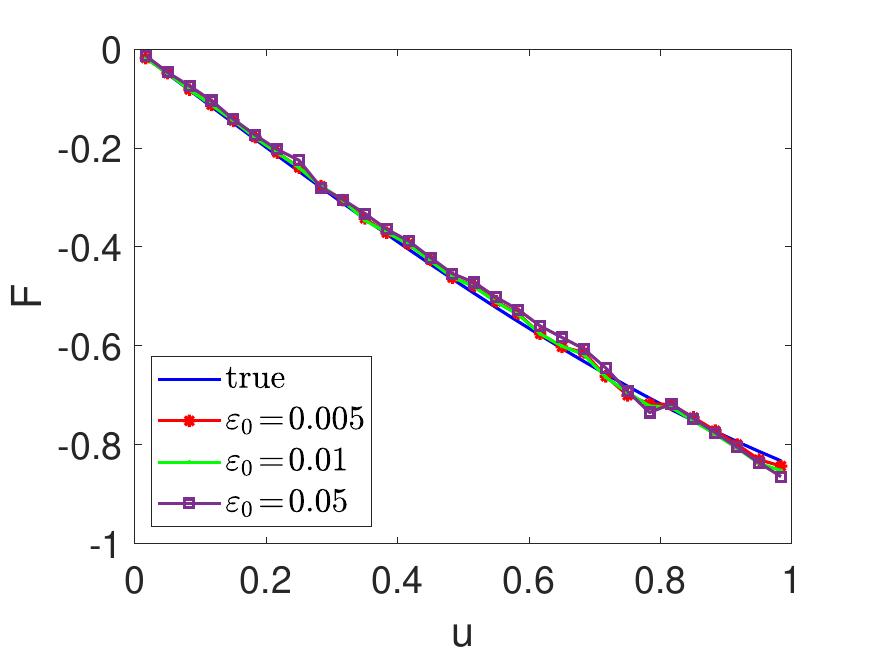}
}
\quad
\subfigure[]{
\label{fig2:subfig4}
\includegraphics[width=0.45\textwidth,trim=4 4 35 15,clip]{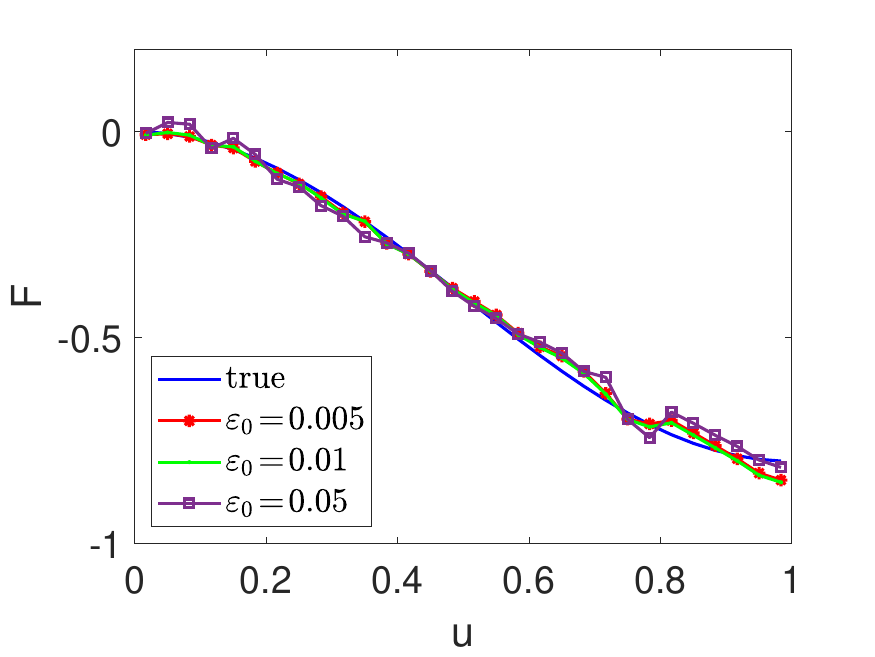}
}
\caption{The true and inversion results of Example \ref{example2}: $(a)~ F^a=-\ln(1+u)$, $(b)~ F^b=(1-e^u)/2$, $(c)~ F^c=-\sin (u)$, $(d)~ F^d=(\cos (\pi u)-1)/2.5$.}
\label{figure2}
\end{figure}
In Example \ref{example2}, we have considered four types of true solution $F$, representing a diverse range of function properties. The results presented in Table \ref{table2} and Figure \ref{figure2} demonstrate that the reconstruction algorithm can yield satisfactory inversion results for these diverse forms of the true solution $F$. This validates the theoretical results and confirms the effectiveness of our algorithm in handling different types of true function forms.

By examining the results shown in Table \ref{table2}, we can also draw a similar conclusion to the previous Example \ref{example1}. As the noise level $\varepsilon_0$ increases from $0.5\%$, $1\%$ to $5\%$, the relative $L^2$ errors in the reconstruction only moderately increase. This indicates that the algorithm has robustness against the measurement errors.

\begin{example}\label{example3}
In this example, we study the impact of the amount of measurement data on the reconstruction results. The true solution is fixed to be $F=-u^3$ and the initial guess to be $F_0=-u^2$. Additionally, the noise level in the observed data is set to $\varepsilon_0=1\%$. We first investigate the effect of shrinking the measurement data subset $\Gamma_1$ to a single point at the midpoint of the right boundary of the domain $\Omega$, i.e., $\Gamma_2=(1,0.5)$.
Next, we fix $\Gamma_1$ and $\Gamma_2$ where the measurement data is collected, and increase the value of $N$ (i.e., the number of $\delta$) from 10 to a larger value of 100.
The specific choices of these parameters as well as the resulting numerical performance are listed in the table \ref{table3}.
\end{example}
From Table \ref{table3} and Figure \ref{figure3}, we can see that the reconstruction algorithm can accurately recover the true solution in all cases with different amount of measurement data. This demonstrates the effectiveness and feasibility of our algorithm. As we expect, the relative errors for measurement subset $\Gamma_1$ are smaller than that for $\Gamma_2$. A larger measurement region can provide more information, thus lead to better inversion results.

For the single-point measurement case ($\Gamma_1$), we find that the relative errors vary slightly for different values of $N$. This indicates that the algorithm is robust to the number of $\delta$ (i.e., $N$) in the case of single-point measurement data. Based on this, we can infer that even with a single $\delta$, the algorithm should still be able to accurately reconstruct the true solution. However, Figure \ref{figure3} shows that the value of $N$ in our algorithm also determines the number of fitting points in the numerical result. When $N$ is too small, the figure of the numerical result may not be able to capture the continuity of the true solution. Therefore, $N$ should not be chosen to be too small.

For measurement subset $\Gamma_1$, there is a situation that is in contrast to the case of measurement subset $\Gamma_2$. Observed Table \ref{table3}, excluding the case of the relatively small $N=10$, we see that as the value of $N$ increases, the relative errors gradually decrease. This suggests that increasing the value of $N$ can improve the inversion results. However, this will lead to an increase in the computational cost. Therefore, we have to choose an appropriate $N$.
\renewcommand{\arraystretch}{1.1}
\begin{table}[!htbp]
\caption{Parameter settings and the corresponding numerical performances in Example \ref{example3}.}
\centering
\begin{tabular}{cccccccc}
\toprule[2pt] 
\multirow{2}{*}{\diagbox{N}{Subset $\Gamma$}}&
\multicolumn{3}{c}{$\Gamma_1$} & \multicolumn{3}{c}{$\Gamma_2$} & \multirow{2}{*}{Illustration}\\
\cmidrule(lr){2-4} \cmidrule(lr){5-7}
    & $M$ & $\lambda$ &  $err(\%)$   & $M$ & $\lambda$ & $err(\%)$\\

\midrule
\multirow{1}{*}{$N=10$}& 0.8 & $9.44\times 10^{-4}$ & 2.67& 0.008 & $8.2\times 10^{-6}$&  6.76 &     Figure \ref{fig3:subfig1}\\


\midrule 
\multirow{1}{*}{$N=20$}& 0.8& $9.46\times 10^{-4}$ & 5.56 & 0.008 & $8.19\times 10^{-6}$ & 7.69 & Figure \ref{fig3:subfig2}\\


\midrule 
\multirow{1}{*}{$N=30$}& 0.8& $9.3\times 10^{-4}$ & 4.43 & 0.008 & $8.19\times 10^{-6}$ & 7.47 & Figure \ref{fig3:subfig3}\\

\midrule 
\multirow{1}{*}{$N=100$}& 0.8& $9.42\times 10^{-4}$ & 3.08 & 0.008 & $8.19\times 10^{-6}$ & 8.13 & Figure \ref{fig3:subfig4}\\

\bottomrule[2pt] 
\end{tabular}\label{table3}
\end{table}
\begin{figure}[!htbp]
\centering
\subfigure[]{
\label{fig3:subfig1}
\includegraphics[width=0.45\textwidth,trim=4 4 35 15,clip]{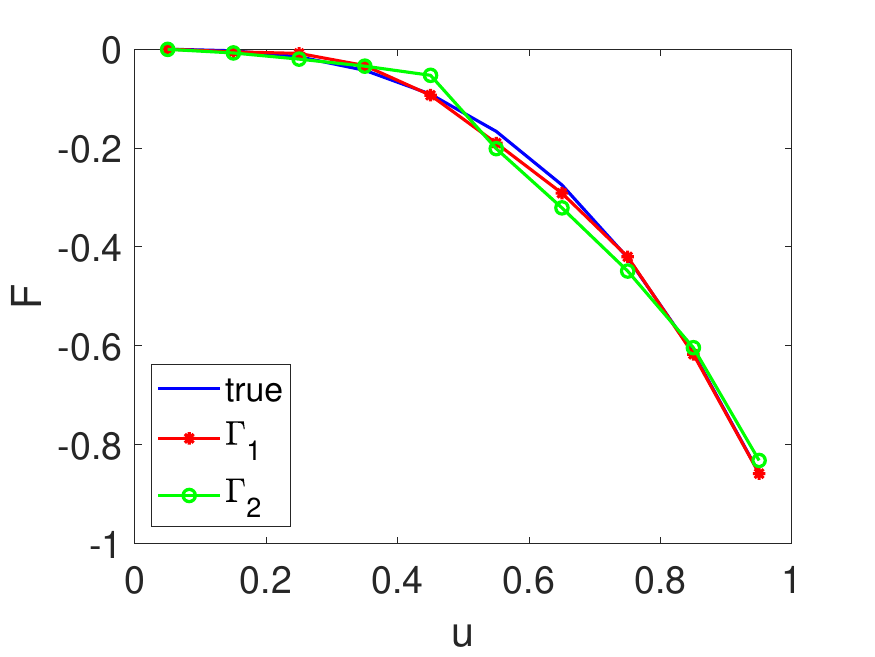}
}
\quad
\subfigure[]{
\label{fig3:subfig2}
\includegraphics[width=0.45\textwidth,trim=5 5 40 15,clip]{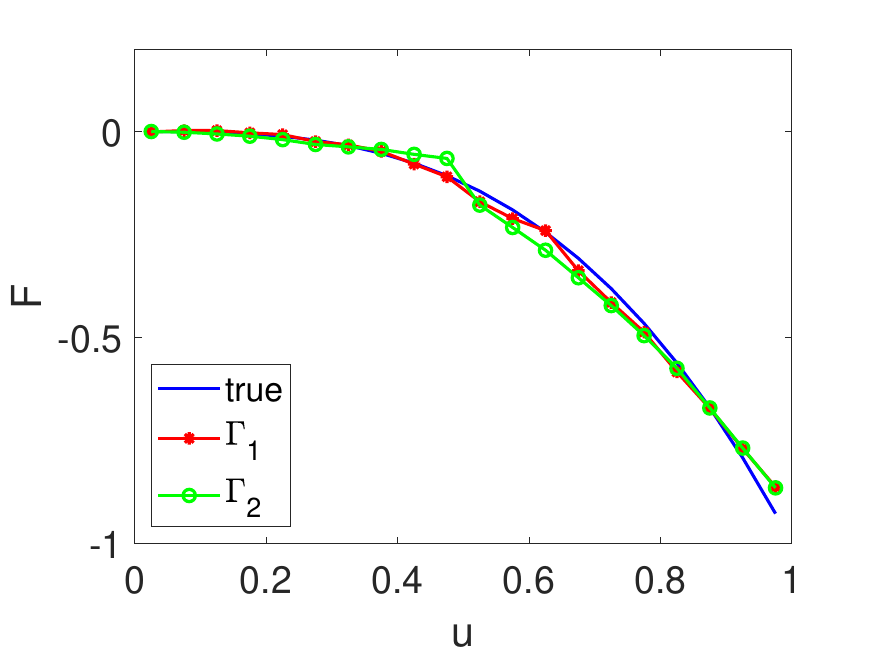}
}
\quad
\subfigure[]{
\label{fig3:subfig3}
\includegraphics[width=0.45\textwidth,trim=4 4 35 15,clip]{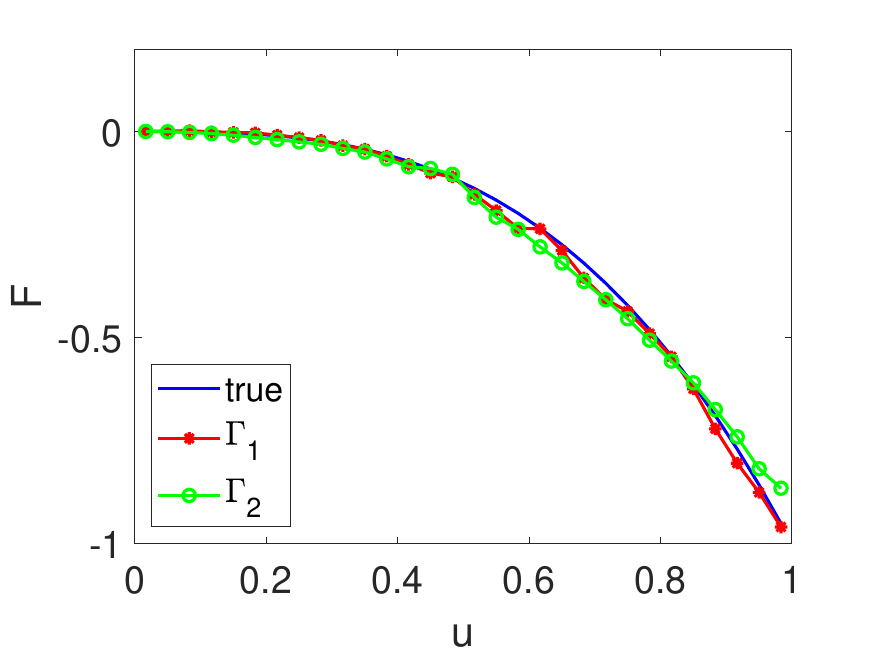}
}
\quad
\subfigure[]{
\label{fig3:subfig4}
\includegraphics[width=0.45\textwidth,trim=4 4 35 15,clip]{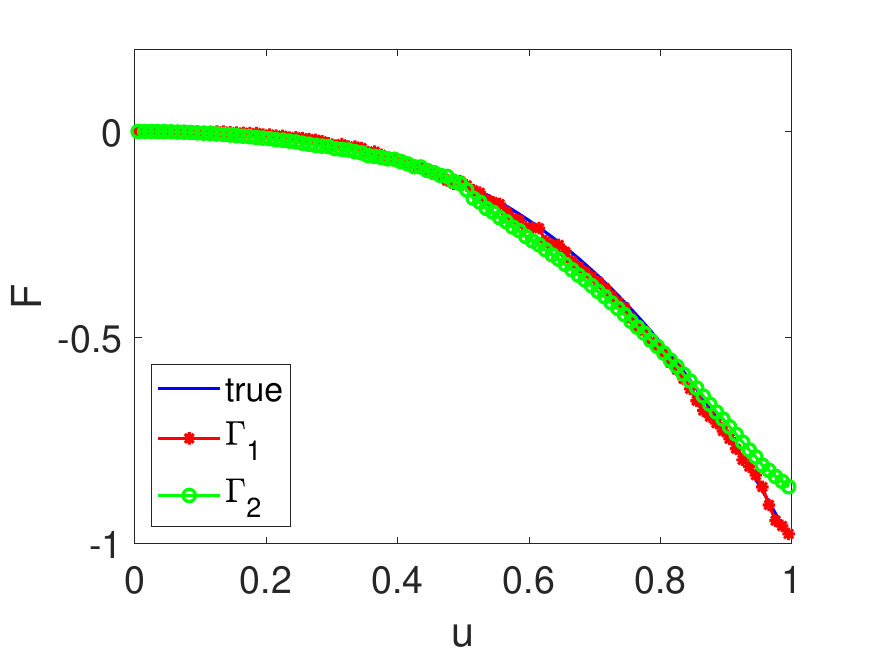}
}
\caption{The true and inversion results of Example \ref{example3}. Legend $\Gamma_1$ and $\Gamma_2$ represent inversion results of the measurement data in subset $\Gamma_1$ and $\Gamma_2$ respectively: $(a)~ N=10$, $(b)~ N=20$, $(c)~ N=30$, $(d)~ N=100$.}
\label{figure3}
\end{figure}


\section*{Acknowledgment}

The work of H. Liu was supported by the NSFC/RGC Joint Research Scheme, N\_CityU101/21; ANR/RGC Joint Research Scheme, A\_CityU203/19; and the Hong Kong RGC General Research Funds (projects 11311122, 12301420 and 11300821).

\bigskip
\vskip 1cm

\end{document}